\theoremstyle{plain}
\newtheorem{theorem}{Theorem}
\newtheorem{lemma}{Lemma}
\newtheorem{corollary}{Corollary}
\theoremstyle{remark}
\newtheorem{remark}{Remark}
\newcommand{\N}{\mathbb{N}}
\newcommand{\Z}{\mathbb{Z}}
\newcommand{\Cset}[2][E]{C[#1,
  \if\relax\expandafter\@gobble#2\relax #2\else(#2)\fi]}
\renewcommand{\MR}[1]{\relax\ifhmode\unskip\fi}
\DeclareMathOperator{\Prob}{\mathsf{P}}
\begin{document}

\title[One class of continuous functions]%
{One class of continuous functions\\ related to Engel series\\
  and having complicated local properties}

\author[O. Baranovskyi]{Oleksandr Baranovskyi}
\address{Institute of Mathematics\\
  National Academy of Sciences of Ukraine\\
  3~Te\-re\-shchen\-kiv\-ska St.\\
  Kyiv\\
  01024\\
  Ukraine}
\email{baranovskyi@imath.kiev.ua}

\author[Yu. Kondratiev]{Yuri Kondratiev}
\address{Fakult\"at f\"ur Mathematik\\
  Universit\"at Bielefeld\\
  Postfach~100\,131\\
  Bielefeld\\
  33501\\
  Germany}
\email{kondrat@math.uni-bielefeld.de}

\author[M. Pratsiovytyi]{Mykola Pratsiovytyi}
\address{Faculty of Physics and Mathematics\\
  National Pedagogical Dragomanov University\\
  9~Pyrohova St.\\
  Kyiv\\
  01601\\
  Ukraine;
  Institute of Mathematics\\
  National Academy of Sciences of Ukraine\\
  3~Tereshchenkivska St.\\
  Kyiv\\
  01024\\
  Ukraine}
\email{pratsiovytyi@imath.kiev.ua}

\thanks{This research was partially supported by FP7-PEOPLE-IRSES program,
  grant no.~PIRSES-GA-2013-612669}

\subjclass[2010]{Primary 26A30; Secondary 11K55, 39B72}

\keywords{Continuous function, singular function, nowhere monotonic
  function, Engel series, $E$-representation of number, level set of
  function, scale invariance of graph of function}

\begin{abstract}
  In the paper, we construct and study the class of continuous on $[0,
  1]$ functions with continuum set of peculiarities (singular, nowhere
  monotonic, and non-differentiable functions are among them).  The
  representative of this class is the function $y = f(x)$ defined by
  the Engel representation of argument:
  \begin{align*}
    x &= \sum_{n=1}^\infty
      \frac{1}{(2+g_1)(2+g_1+g_2)\ldots(2+g_1+g_2+\ldots+g_n)} = \\
    &= \Delta^E_{g_1g_2\ldots g_n\ldots},
    \quad \text{where} \quad g_n = g_n(x) \in \{ 0, 1, 2, \ldots \},
  \end{align*}
  and convergent real series
  \[
    \sum_{n=0}^\infty u_n = u_0 + u_1 + \ldots + u_n + r_n = 1, \quad
    \lvert u_n \rvert < 1, \quad
    0 < r_n < 1,
  \]
  by the following equality
  \[
    f(\Delta^E_{g_1(x)g_2(x)\ldots g_n(x)\ldots})
    = r_{g_1(x)} + \sum_{k=2}^\infty \biggl(
      r_{g_k(x)} \prod_{i=1}^{k-1} u_{g_i(x)} \biggr).
  \]
  We study local and global properties of function $f$: structural,
  extremal, differential, integral, and fractal properties.
\end{abstract}

\maketitle

\section{Introduction}

Most of continuous on the unit interval functions have complicated
local properties (infinite and even continuum set of
peculiarities)~\cite{Banach:1931:BKG,Mazurkiewicz:1931:FND,%
  Zamfirescu:1981:MMF,Kozyrev:1983:TTC:en}.  In particular, singular
functions (their derivative is equal to zero almost everywhere with
respect to Lebesgue measure), nowhere monotonic functions (they do not
have any arbitrary small monotonicity interval), and
non-differentiable functions (they do not have derivative in any
point) are among them.  This paper is devoted to such functions.  To
model and study them we need fine tools and methods.  Analytic
expressions for these functions contain infinite amount of operations
or limiting processes.  Series~\cite{Baranovskyi:2013:ROS:en},
infinite products, continued fractions and other are often used to
this end.  This is true at least for classic nowhere differentiable
Weierstra\ss~\cite{Turbin:1992:FMF:en}, Takagi~\cite{Takagi:1901:SEC},
Sierpi\'nski~\cite{Pratsiovytyi:2013:RIH:en},
Bush--Wunderlich~\cite{Bush:1952:CFD,Wunderlich:1952:USN} functions
etc., singular Cantor~\cite{Pratsiovytyi:1998:FPD:en},
Salem~\cite{Salem:1943:SSM,Chatterji:1963:CIM,Marsaglia:1971:RVI,%
  Takacs:1978:ICS}, Minkowski~\cite{Minkowski:1905:GZ},
Sierpi\'nski~\cite{Sierpinski:1916:EEF} functions etc.

Recently systems of functional
equations~\cite{Kalashnikov:2008:DFS:en,Pratsiovytyi:2010:OKS:en,%
  Pratsiovytyi:2011:OKN:en,Albeverio:2013:OCF:en,%
  Pratsiovytyi:2013:SSN:en}, iterated function systems, various system
of representation of numbers (systems of encoding of
numbers)~\cite{Pratsiovytyi:2002:FVO:en,Pratsiovytyi:2005:PDR:en,%
  Albeverio:2007:OSR,Baranovskyi:2007:TMP:en,Pratsiovytyi:2011:NMS:en,%
  Pratsiovytyi:2013:OSN:en,Pratsiovytyi:2013:FPF}, and automata with
finite memory (converters of digits from one representation to
another)~\cite{Pratsiovytyi:1989:NKP:en,Pratsiovytyi:2002:FVO:en,%
  Koval:2003:SHF:en,Panasenko:2008:FRH:en,Panasenko:2009:HBD:en} are
used to this end.

There exist some methodological problems in development of general as
well as individual theory of such functions.  First of all the reason
is an absence of effective means of their definition (description) and
tools for their study.

Ideas of theory of fractals (fractal geometry and fractal analysis),
i.e., self-similar, self-affine, scale-invariant properties can be
effectively used to this end.

Now several main directions in the study of local and global fractal
properties of functions are developed:
\begin{enumerate}
\item fractal characteristics of essential sets for function (for
  example, sets of peculiarities)~\cite{Turbin:1992:FMF:en,%
    Pratsiovytyi:1998:FPD:en};
\item properties of level sets of
  function~\cite{Pratsiovytyi:2013:RIH:en,Turbin:1992:FMF:en};
\item fractal properties of graphs~\cite{Panasenko:2006:FVO:en,%
    Panasenko:2008:FRH:en,Panasenko:2009:HBD:en,%
    Pratsiovytyi:2009:DFV:en};
\item preservation or transformation of fractal dimension by
  function~\cite{Albeverio:2004:FPD}.
\end{enumerate}

In this paper, we construct and study the infinite-parameter family of
continuous functions with complicated local properties: singular
(monotonic or non-monotonic), nowhere monotonic, non-differentiable or
almost everywhere non-dif\-fer\-en\-tiable functions are among them.  To
this end we use $E$-representation of real number, i.e., its encoding
by infinite alphabet in the form of Engel series (positive series such
that their terms are reciprocal to cumulative products of positive
integers)~\cite{Engel:1914:EZS,Erdos:1958:ESS,Renyi:1962:NAT,%
  Pratsiovytyi:2006:REZ:en,Hetman:2008:ZCS:en,Hetman:2010:VOS:en,%
  Baranovskyi:2011:PAM:en}.

Similar object related to representation of numbers with finite
alphabet and self-similar geometry ($s$-adic representation,
$Q$-representation) was studied in
papers~\cite{Pratsiovytyi:2011:OKN:en,Pratsiovytyi:2013:SSN:en}.  But
$E$-representation has an infinite alphabet and non-self-similar
geometry.  So some metric and probabilistic problems are essentially
complicated for $E$-representation than for self-similar and
N-self-similar representations.  In
papers~\cite{Pratsiovytyi:2011:OKN:en,Pratsiovytyi:2013:SSN:en}, they
found expression (not only estimation) for integral of function in
terms of parameters of initial system of functional equations and
described conditions for non-differentiability of function.

In our previous paper~\cite{Albeverio:2013:OCF:en} we studied
continuous functions such that they are solutions of infinite system
of functional equations with countable set of parameters related to
representation of real numbers by the first Ostrogradsky series.
Unlike~\cite{Albeverio:2013:OCF:en} in this paper we study properties
of level sets as well as scale-invariant and integral properties of
function.

\section{Object of study}

It follows~\cite{Pratsiovytyi:2006:REZ:en} from the known Engel
theorem~\cite{Engel:1914:EZS} that \emph{for any number $x \in (0, 1]$
  there exists a unique sequence $(g_n)$ of non-negative integers such
  that}
\begin{equation}
  \label{eq:Engel.series}
  x = \sum_{n=1}^\infty
    \frac{1}{(2+g_1)(2+g_1+g_2)\ldots(2+g_1+g_2+\ldots+g_n)}
  \equiv \Delta^E_{g_1g_2\ldots g_n\ldots}.
\end{equation}
The series~\eqref{eq:Engel.series} is called \emph{Engel series}, last
symbolic notation of number $x$ is called its
\emph{$E$-representation}, and $g_n = g_n(x)$ is $n$th symbol (digit)
of this representation.

Let us remark that $E$-representation of number is its encoding by
infinite alphabet $A \equiv \Z_0 = \{ 0, 1, 2, \ldots \}$.

If there exist $p \in \N$ such that $g_{m+np+j} = g_{m+j}$, $1 \leq j
\leq p$, for any $n \in \Z_0$, then they say that $E$-representation
has a period
\[
  g_{m+1}g_{m+2}\ldots g_{m+p}.
\]
It is written by $\Delta^E_{g_1g_2\ldots g_m(g_{m+1}g_{m+2}\ldots
  g_{m+p})}$.

The number is called \emph{$E$-rational} if its $E$-representation has
a period $(0)$.  The $E$-representation of such numbers has the
following form: $\Delta^E_{c_1c_2\ldots c_m(0)}$.

Let $(u_n)_{n=0}^\infty$ be an infinite sequence of real numbers
having the following properties (initial conditions):
\begin{align}
  \label{eq:un.init.conditions.sum1}
  &\sum_{n=0}^\infty u_n = u_0 + u_1 + \ldots + u_n + r_n
  = S_n + r_n = 1; \\
  \label{eq:un.init.conditions.abs1}
  &\lvert u_n \rvert < 1 \quad \text{for any $n \in \Z_0$}; \\
  \label{eq:un.init.conditions.rn.un}
  &0 < r_n \equiv \sum_{i=n+1}^\infty u_i = r_{n-1} - u_n
  = 1 - (u_0 + u_1 + \ldots + u_n) < 1, \quad n \in \Z_0.
\end{align}

It follows from property~\eqref{eq:un.init.conditions.sum1} that
sequence $(u_n)$ is infinitesimal, and it follows from
property~\eqref{eq:un.init.conditions.rn.un} that
\[
  0 < u_0 + u_1 + \ldots + u_n = S_n < 1
  \quad \text{for any} \quad n = 0, 1, 2, \ldots
\]
and series~\eqref{eq:Engel.series} has an infinite number of nonzero
terms.

Examples of various sequences $(u_n)$ are following:
\begin{enumerate}
\item $\frac{1}{u_0} = 2$, $\frac{1}{u_{n+1}} = \frac{1}{u_n} \left(
    \frac{1}{u_n} - 1 \right) + 1$, $n \in \Z_0$;
\item $u_n = \frac{1}{2^{n+1}}$, $n \in \Z_0$;
\item $u_n = \begin{cases}
    \frac{1}{2^{n+1}} & \text{if $n = 2k$}, \\
    0 & \text{if $n = 2k + 1$, $k \in \Z_0$};
  \end{cases}$
\item $u_0 = \frac{2}{3}$, $u_1 = - \frac{1}{6}$, $u_n =
  \frac{1}{2^n}$, $n = 2$, $3$, \ldots;
\item $u_{2(k-1)} = \frac{a}{2^k}$, $u_{2k-1} = \frac{1-a}{2^k}$, $k
  \in \N$, $a \in (1, 2)$.
\end{enumerate}

\begin{remark}
  \label{rem:un.insert.0}
  If sequence $(u_n)$ satisfies initial
  conditions~\eqref{eq:un.init.conditions.sum1}--\eqref{eq:un.init.conditions.rn.un},
  then putting finite or infinite number of zeroes between its terms
  we obtain new sequence satisfying initial
  conditions~\eqref{eq:un.init.conditions.sum1}--\eqref{eq:un.init.conditions.rn.un}
  too.
\end{remark}

The \emph{main object} of this study is the function
\begin{equation}
  \label{eq:f.series}
  y = f(x) = r_{g_1(x)} + \sum_{k=2}^\infty
    \biggl( r_{g_k(x)} \prod_{i=1}^{k-1} u_{g_i(x)} \biggr)
  \equiv \Delta_{g_1g_2\ldots g_n\ldots},
\end{equation}
where $g_n = g_n(x)$ is $n$th symbol of $E$-representation of the
number $x \in (0, 1]$.

It is evident that equality~\eqref{eq:f.series} does not define
function $f$ out of left-open interval $(0, 1]$.  Moreover,
\begin{gather*}
  f(1) = f(\Delta^E_{(0)}) = r_0 + r_0 u_0 + r_0 u_0^2 + \ldots
  = \frac{r_0}{1-u_0} = 1, \\
  f(\Delta^E_{c_1\ldots c_m(c)}) = r_{c_1} + \sum_{k=2}^m
    \biggl( r_{c_k} \prod_{i=1}^{k-1} u_{c_i} \biggr)
      + \frac{r_c}{1-u_c} \prod_{i=1}^m u_{c_i},
\end{gather*}
in particular,
\begin{equation}
  \label{eq:f.E.rational}
  f(\Delta^E_{c_1\ldots c_m(0)}) = r_{c_1} + \sum_{k=2}^m
    \biggl( r_{c_k} \prod_{i=1}^{k-1} u_{c_i} \biggr)
      + \prod_{i=1}^m u_{c_i}.
\end{equation}
Equality~\eqref{eq:f.E.rational} gives an expression for value of the
function in $E$-rational point.

Since any number $x \in (0, 1]$ has a unique $E$-representation, to
prove that function $f$ is well defined, it is enough to show that
series~\eqref{eq:f.series} is convergent for any sequence of
non-negative integers $(g_n)$.

There are negative numbers and zeroes among the terms $u_n$.  So, in
general, series~\eqref{eq:f.series} is not positive.  Thus consider
the series with general term
\[
  v_k = r_{g_k} \prod_{j=1}^{k-1} \lvert u_{g_j} \rvert.
\]

Since
\[
  r_{g_k} \leq r^*
  \equiv \max \{ r_0, r_1, \ldots, r_n, \ldots \}
\]
and
\[
  \lvert u_{g_k} \rvert \leq u^*
  \equiv \max \{ \lvert u_1 \rvert, \lvert u_2 \rvert, \ldots,
    \lvert u_n \rvert, \ldots \} < 1,
\]
we have $v_k \leq r^* (u^*)^{k-1} \equiv w_k$.

Using the direct comparison test we obtain that convergence of series
with general term $w_k$ implies convergence of series with general
term $v_k$.  Moreover,
\[
  \sum_{k=1}^\infty v_k \leq \sum_{k=1}^\infty w_k
  = \frac{r^*}{1-u^*}.
\]
Therefore, series~\eqref{eq:f.series} is absolutely convergent.  So
function $f$ is well defined.

We study local and global properties of function $f$: structural,
extremal, differential, integral and fractal properties, in
particular, ``symmetries'' of graph.

\section{Range of the function}

\begin{lemma}
  \label{lem:f.range}
  Value of the function $f$ belongs to closed interval $[0, 1]$.
\end{lemma}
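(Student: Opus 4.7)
The plan is to recognise $f$ as the limit of iterated compositions of the affine maps $\phi_n\colon t \mapsto r_n + u_n t$, and to exploit the fact that each $\phi_n$ maps $[0,1]$ into $[0,1]$ together with closedness of $[0,1]$ under limits.

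First I would verify that $\phi_n([0,1]) \subseteq [0,1]$ for every $n \in \Z_0$. The image is the segment with endpoints $\phi_n(0) = r_n$ and $\phi_n(1) = r_n + u_n$. Condition~\eqref{eq:un.init.conditions.rn.un} gives $r_n \in (0,1)$, while the identity $r_n + u_n = r_{n-1}$, valid for $n \geq 1$ by~\eqref{eq:un.init.conditions.rn.un}, together with $r_0 + u_0 = 1$ from~\eqref{eq:un.init.conditions.sum1}, gives $r_n + u_n \in (0,1]$ in either case. Hence both endpoints lie in $[0,1]$, so the segment between them does as well, irrespective of the sign of $u_n$.

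Next, a straightforward induction on $N$ yields
\[
  (\phi_{g_1} \circ \phi_{g_2} \circ \cdots \circ \phi_{g_N})(t)
  = r_{g_1} + \sum_{k=2}^N r_{g_k} \prod_{i=1}^{k-1} u_{g_i}
     + t \prod_{i=1}^N u_{g_i}.
\]
Setting $t = 0$, the right-hand side is the $N$-th partial sum in~\eqref{eq:f.series}, which tends to $f(x)$ as $N \to \infty$ (convergence of the series was already established just after~\eqref{eq:f.series}). Since each value $(\phi_{g_1} \circ \cdots \circ \phi_{g_N})(0)$ belongs to $[0,1]$ by the preceding step, the limit $f(x)$ lies in $[0,1]$.

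The main subtlety is that the sequence $(u_n)$ may contain negative or zero terms, so one cannot argue directly from the signs of the summands of~\eqref{eq:f.series}. Packaging them into the affine maps $\phi_n$ sidesteps this issue: the image $\phi_n([0,1])$ is always the interval whose endpoints are $r_n$ and $r_{n-1}$ (with $r_{-1}$ interpreted as $1$), both in $[0,1]$, so both signs of $u_n$ are treated uniformly and no case analysis is required.
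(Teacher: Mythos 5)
Your proposal is correct and is essentially the paper's argument in different packaging: the paper also proves by induction that the partial sums $y_N$ lie in $(0,1)$, using exactly the recursion $y_{k+1} = r_{g_1} + u_{g_1} y_k^*$ (i.e.\ your map $\phi_{g_1}$ applied to a shorter partial sum), and then passes to the limit to get $f(x) \in [0,1]$. Your observation that $\phi_n([0,1])$ is the segment with endpoints $r_n$ and $r_{n-1} \le 1$ merely absorbs the paper's explicit case analysis on the sign of $u_{g_1}$, so the two proofs coincide in substance.
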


\begin{proof}
  Let
  \[
    y_m = r_{g_1(x)} + \sum_{k=2}^m
    \biggl( r_{g_k(x)} \prod_{i=1}^{k-1} u_{g_i(x)} \biggr)
  \]
  be a partial sum of series~\eqref{eq:f.series}.

  We prove by induction that for any sequence $(g_n)$, $g_n \in \Z_0$,
  and any positive integer $m$ the following inequality holds:
  \begin{equation}
    \label{eq:ym.ineq}
    0 < y_m < 1.
  \end{equation}
  For $m = 1$, we have $y_1 = r_{g_1} \in (0, 1)$ by definition of
  $(u_n)$.  Consider
  \[
    y_2 = r_{g_1} + r_{g_2} u_{g_1}.
  \]
  If $g_1 = 0$, then
  \[
    0 < r_0 < y_2 = r_0 + r_{g_2} u_0 < r_0 + u_0 = 1,
  \]
  because $u_0 \in (0, 1)$.

  Let $g_1 > 0$.  If $u_{g_1} > 0$, then
  \[
    0 < r_{g_1} < y_2 < r_{g_1} + u_{g_1} = r_{g_1-1} < 1.
  \]
  If $u_{g_1} < 0$, then
  \[
    0 < r_{g_1-1} = r_{g_1} + u_{g_1} \leq y_2 \leq r_{g_1} < 1.
  \]
  Thus $0 < y_2 < 1$ for any sequence $(g_n)$.

  Suppose that $0< y_k < 1$ for any $(g_n)$ and consider $y_{k+1}$.

  Since
  \begin{equation*}
    y_{k+1} = r_{g_1} + u_{g_1} \biggl( r_{g_2} + \sum_{l=3}^{k+1}
      \biggl( r_{g_l} \prod_{i=1}^{l-1} u_{g_i} \biggr) \biggr)
    = r_{g_1} + u_{g_1} y_k^*,
  \end{equation*}
  where $y_k^*$ is a partial sum of series~\eqref{eq:f.series} for
  sequence $(g_2, g_3, \ldots, g_l, \ldots)$, by the inductive
  assumption, we have $0 < y_k^* < 1$.  Hence for $u_{g_1} > 0$ we
  obtain
  \[
    0 < r_{g_1} < y_{k+1} < r_{g_1} + u_{g_1} = r_{g_1-1} \leq 1,
  \]
  and for $u_{g_1} \leq 0$ we obtain
  \[
    0 < r_{g_1-1} < r_{g_1} + u_{g_1} \leq y_{k+1} \leq r_{g_1} < 1.
  \]
  Thus $0< y_{k+1} < 1$ for any sequence $(g_n)$.

  Then by principle of mathematical induction we have the double
  inequality~\eqref{eq:ym.ineq}. Passing to the limit in this
  inequality we obtain
  \[
    0 \leq y = \lim_{m\to\infty} y_m \leq 1.
    \qedhere
  \]
\end{proof}

\begin{lemma}
  The function $f$ satisfies the functional equation
  \begin{equation}
    \label{eq:f.func.eq}
    f(x) = r_{g_1(x)} + u_{g_1(x)} f(\omega(x)),
  \end{equation}
  where $\omega(x) = \omega(\Delta^E_{g_1(x)g_2(x)\ldots
    g_n(x)\ldots}) = \Delta^E_{g_2(x)g_3(x)\ldots g_n(x)\ldots} = x'$
  is a shift operator on symbols of $E$-representation of number.
\end{lemma}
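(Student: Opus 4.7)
The plan is to obtain \eqref{eq:f.func.eq} by direct algebraic manipulation of the defining series \eqref{eq:f.series}, using the absolute convergence established immediately before Lemma~\ref{lem:f.range} to justify any rearrangement or re-indexing.

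First, I would write out
\[
  f(x) = r_{g_1(x)} + \sum_{k=2}^\infty
    \biggl( r_{g_k(x)} \prod_{i=1}^{k-1} u_{g_i(x)} \biggr),
\]
isolate the factor $u_{g_1(x)}$ appearing in every summand of the tail, and pull it out of the sum. This is legitimate because the series converges absolutely and $u_{g_1(x)}$ is a single constant; it yields
\[
  f(x) = r_{g_1(x)} + u_{g_1(x)}
    \sum_{k=2}^\infty \biggl( r_{g_k(x)} \prod_{i=2}^{k-1} u_{g_i(x)} \biggr).
\]

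Next, I would re-index the remaining sum with $j = k-1$ so that it runs from $j=1$ to $\infty$, and then invoke the definition of the shift operator: since $\omega(x) = \Delta^E_{g_2(x)g_3(x)\ldots}$, uniqueness of the $E$-representation gives $g_j(\omega(x)) = g_{j+1}(x)$ for every $j \in \N$. Substituting turns the tail into exactly the series \eqref{eq:f.series} written for the argument $\omega(x)$, and one recognizes it as $f(\omega(x))$, giving \eqref{eq:f.func.eq}.

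The only genuine point to check is the identification $g_j(\omega(x)) = g_{j+1}(x)$, i.e., that the sequence obtained by dropping the first digit is a bona fide $E$-representation of a number in $(0,1]$. Here I would observe that $(g_{n+1}(x))_{n\ge 1}$ is a sequence of non-negative integers and the corresponding Engel series~\eqref{eq:Engel.series} converges to some $x' \in (0,1]$; by Engel's theorem (cited at the start of Section~2), the digits of $x'$ are uniquely determined, so they coincide with $(g_{n+1}(x))$. This is the only non-routine step; the rest is bookkeeping permitted by absolute convergence.
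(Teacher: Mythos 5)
Your proof is correct and follows essentially the same route as the paper: the paper simply rewrites the series \eqref{eq:f.series} with the factor $u_{g_1(x)}$ pulled out of the tail and recognizes the bracketed expression as $f(\omega(x))$. Your additional check that the shifted digit sequence is the genuine $E$-representation of $\omega(x)$ (via uniqueness in Engel's theorem) is a reasonable piece of extra care that the paper leaves implicit.
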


\begin{proof}
  Proposition follows directly from fact that
  series~\eqref{eq:f.series} can be written in the form
  \[
    f(x) = r_{g_1(x)} + u_{g_1(x)} \biggl( r_{g_2(x)} + \sum_{k=3}^\infty
      r_{g_k(x)} \prod_{i=2}^{k-1} u_{g_i(x)} \biggr).
    \qedhere
  \]
\end{proof}

\begin{corollary}
  The function $f$ satisfies the infinite system of functional
  equations
  \[
    f(\delta_i(x)) = r_i + u_i f(x),
    \quad i = 0, 1, 2, \ldots,
  \]
  where $\delta_i(x) = \Delta^E_{ig_1(x)g_2(x)\ldots g_n(x)\ldots}$.
\end{corollary}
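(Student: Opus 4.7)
The plan is to apply the functional equation \eqref{eq:f.func.eq} of the preceding lemma directly to the point $\delta_i(x)$, reading off the two ingredients ($g_1$ and the shift) from the Engel digits of this point.

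First I would check that $\delta_i(x) \in (0, 1]$, so that $f(\delta_i(x))$ is defined. This is immediate: the sequence $(i, g_1(x), g_2(x), \ldots)$ consists of non-negative integers, and by the Engel theorem it is the unique $E$-representation of some point in $(0, 1]$, which is precisely $\delta_i(x)$ by the definition of $\delta_i$.

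Next I would identify the first digit and shifted tail. By construction $g_1(\delta_i(x)) = i$, and the shift operator strips off this leading digit, giving $\omega(\delta_i(x)) = \Delta^E_{g_1(x)g_2(x)\ldots} = x$. Substituting these into \eqref{eq:f.func.eq} evaluated at $\delta_i(x)$ yields
\[
  f(\delta_i(x))
  = r_{g_1(\delta_i(x))} + u_{g_1(\delta_i(x))}\, f(\omega(\delta_i(x)))
  = r_i + u_i\, f(x),
\]
which is the claim.

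There is no real obstacle: the corollary is essentially a rewriting of the lemma in which the argument is specialised to a point whose leading Engel digit is the prescribed value $i$. The only thing to be a little careful about is that the ``prepend $i$'' operation $\delta_i$ really produces a valid argument of $f$ and really has the advertised first digit and shift, both of which follow at once from the uniqueness part of the Engel theorem.
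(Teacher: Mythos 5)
Your proof is correct and is exactly the route the paper intends: the corollary is stated without proof as an immediate consequence of the lemma, obtained by evaluating the functional equation \eqref{eq:f.func.eq} at $\delta_i(x)$, whose first $E$-digit is $i$ and whose shift is $x$. Your extra care about $\delta_i(x)\in(0,1]$ (every sequence of non-negative integers is the $E$-representation of some point, cf.\ cylinder property~(5)) is a harmless and valid addition.
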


Let us remark that shift operator $\omega(x)$ is not a
piecewise-linear function of $x$ (for example, this is true for
expansion of numbers in the form of the L\"uroth series or the
$L$-representation~\cite{Zhykhareva:2008:ZCZ:en,%
  Zhykharyeva:2012:ENP,Pratsiovytyi:2013:TMP}), because
\[
  \omega(x) = x' = \Delta^E_{g_2(x)g_3(x)\ldots g_n(x)\ldots}
  = \frac{1}{2+g_2} + \frac{1}{(2+g_2)(2+g_2+g_3)} + \ldots.
\]
Thus relation between $x$ and $x'$ is more complicated.

Do not misunderstand shift operator $\omega(x)$ on symbols of
$E$-representation for operator (function)
\[
  \theta(x) = \Delta^E_{[g_1(x)+g_2(x)]g_3(x)g_4(x)\ldots}
  = [2+g_1(x)] x - 1.
\]
The latter is piecewise-linear.

\begin{theorem}
  \label{thm:f.range}
  The range of the function $f$ defined by
  equality~\eqref{eq:f.series} belongs to interval $(0, 1]$, and value
  $f(x)$ belongs to interval $(a(x), b(x)]$, where
  \begin{gather*}
    a(x) = \min \{ r_{g_1(x)}, r_{g_1(x)-1} \}, \quad
    b(x) = \max \{ r_{g_1(x)}, r_{g_1(x)-1} \}, \\
    r_{-1} \equiv u_0 + r_0 = 1.
  \end{gather*}
\end{theorem}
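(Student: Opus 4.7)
The plan is to combine the functional equation $f(x) = r_{g_1(x)} + u_{g_1(x)}\, f(\omega(x))$ just established with the a priori bound $f(\omega(x)) \in [0,1]$ from Lemma~\ref{lem:f.range}, and then to split into three cases according to the sign of $u_{g_1(x)}$.

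If $u_{g_1(x)} > 0$, multiplying $0 \leq f(\omega(x)) \leq 1$ by $u_{g_1(x)}$ and adding $r_{g_1(x)}$ yields
\[
  r_{g_1(x)} \leq f(x) \leq r_{g_1(x)} + u_{g_1(x)} = r_{g_1(x)-1},
\]
where the last equality is the recursion $r_{n-1} = r_n + u_n$ from~\eqref{eq:un.init.conditions.rn.un}; since $u_{g_1(x)} > 0$ forces $r_{g_1(x)} < r_{g_1(x)-1}$, the endpoints here coincide with $a(x)$ and $b(x)$. If $u_{g_1(x)} < 0$, the inequalities reverse to $r_{g_1(x)-1} \leq f(x) \leq r_{g_1(x)}$, again matching $a(x) \leq f(x) \leq b(x)$. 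The degenerate case $u_{g_1(x)} = 0$ collapses to $f(x) = r_{g_1(x)} = a(x) = b(x)$. In every case $a(x) \leq f(x) \leq b(x)$; since each $r_n$ with $n \geq 0$ lies in $(0,1)$ and $r_{-1} = 1$ arises as $b(x)$ only when $g_1(x) = 0$, this produces $f(x) \in (0,1]$, with $f(1) = 1$ obtained from the closed-form expression~\eqref{eq:f.E.rational}.

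To upgrade the left bound to the strict inequality $f(x) > a(x)$, I would reapply the just-derived lower estimate at $y = \omega(x)$: since $f(\omega(x)) \geq a(\omega(x)) > 0$, the case $u_{g_1(x)} > 0$ gives $f(x) > r_{g_1(x)} = a(x)$ immediately. The main obstacle is the case $u_{g_1(x)} < 0$, where strictness instead requires $f(\omega(x)) < 1$. I would handle it by showing that $f(y) = 1$ forces $y = 1$: the functional equation together with $1 - r_{g_1(y)} > 0$ and $f(\omega(y)) \leq 1$ forces $u_{g_1(y)} > 0$ and $f(\omega(y)) = (1 - r_{g_1(y)})/u_{g_1(y)}$, and the constraint $f(\omega(y)) \leq 1$ iterated along the orbit of $\omega$ eventually forces every digit $g_n(y)$ to be zero, so $y = \Delta^E_{(0)} = 1$. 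This rules out $\omega(x) = 1$ whenever $x$ is not of the form $\Delta^E_{g_1(x)(0)}$, completing the strict lower bound.
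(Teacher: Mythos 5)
Your main argument is the same as the paper's: combine the functional equation \eqref{eq:f.func.eq} with the bound $0\leq f(\omega(x))\leq 1$ from Lemma~\ref{lem:f.range} and split on the sign of $u_{g_1(x)}$; that part is fine and matches the paper's proof step for step. Where you go beyond the paper is the strictness of the lower bound, and there your two extra ingredients are themselves correct: the bootstrap $f(\omega(x))\geq a(\omega(x))>0$ legitimately gives $f(x)>r_{g_1(x)}$ when $u_{g_1(x)}>0$ (the paper simply asserts this strict inequality), and the implication $f(y)=1\Rightarrow y=1$ is true, by exactly the iteration you sketch, since $u_{g_1(y)}f(\omega(y))=1-r_{g_1(y)}>0$ forces $u_{g_1(y)}>0$ and $r_{g_1(y)-1}\geq 1$, hence $g_1(y)=0$ and $f(\omega(y))=1$, and so on.

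The gap is in your last clause, ``completing the strict lower bound.'' Your argument excludes $\omega(x)=1$ only when $x$ is not of the form $\Delta^E_{c(0)}$, and for the remaining points the strict bound is not merely unproved --- it is false. If $u_c<0$ and $x=\Delta^E_{c(0)}$, then $\omega(x)=\Delta^E_{(0)}=1$, $f(\omega(x))=1$, and $f(x)=r_c+u_c=r_{c-1}=a(x)$. Concretely, for the paper's example sequence $u_0=\tfrac23$, $u_1=-\tfrac16$, $u_n=2^{-n}$ ($n\geq 2$), one gets $f(\Delta^E_{1(0)})=f(\tfrac12)=r_0=\tfrac13=a(\tfrac12)$. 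So no argument can close this case; what is actually provable is $a(x)\leq f(x)\leq b(x)$ with strictness on the left whenever $u_{g_1(x)}\geq 0$ or $\omega(x)\neq 1$, together with $f>0$ on $(0,1]$ --- i.e.\ the nonstrict form recorded in Corollary~\ref{cor:f.range} for $u_{g_1(x)}\leq 0$. Note that the paper's own proof has the same blemish: in the case $u_{g_1(x)}\leq 0$ its displayed chain is nonstrict, yet it concludes $a(x)<f(x)\leq b(x)$ without justification. So your attempt exposes an imprecision in the theorem's statement at these exceptional $E$-rational points rather than a flaw in your method; you should just state the conclusion in the corrected (nonstrict) form instead of claiming the strict bound everywhere.
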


\begin{proof}
  Since equality~\eqref{eq:f.func.eq} holds and by
  Lemma~\ref{lem:f.range}
  \[
    0 \leq f(\omega(x)) \leq 1,
  \]
  for $u_{g_1(x)} > 0$ we have
  \[
    0 < a(x) = r_{g_1(x)} < f(x)
      \leq r_{g_1(x)} + u_{g_1(x)} = r_{g_1(x)-1} = b(x) \leq 1,
  \]
  and for $u_{g_1(x)} \leq 0$ we have $g_1(x) \geq 1$ and
  \[
    0 < a(x) = r_{g_1(x)-1} = r_{g_1(x)} + u_{g_1(x)} \leq f(x)
      \leq r_{g_1(x)} = b(x) < 1.
  \]
  Hence $a(x) < f(x) \leq b(x)$.  Since for $j \geq 0$
  \[
    0 < r_j < 1 \quad \text{and} \quad 0 < a(x) < f(x),
  \]
  we have $f(x) > 0$.
\end{proof}

\begin{corollary}
  \label{cor:f.range}
  If $u_{g_1(x)} > 0$, then $r_{g_1(x)} < f(x) \leq r_{g_1(x)-1}$; and
  if $u_{g_1(x)} \leq 0$, then $r_{g_1(x)-1} \leq f(x) \leq
  r_{g_1(x)}$.
\end{corollary}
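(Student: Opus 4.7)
The plan is to read off the corollary as an immediate reinterpretation of Theorem~\ref{thm:f.range}, using the identity $r_{g_1(x)-1} = r_{g_1(x)} + u_{g_1(x)}$ from~\eqref{eq:un.init.conditions.rn.un} (with the convention $r_{-1} = 1$) to resolve which of the two values $r_{g_1(x)}$, $r_{g_1(x)-1}$ plays the role of $a(x)$ and which plays the role of $b(x)$.

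More concretely, I would split into the two cases announced in the statement. If $u_{g_1(x)} > 0$, then $r_{g_1(x)-1} - r_{g_1(x)} = u_{g_1(x)} > 0$, so $r_{g_1(x)} < r_{g_1(x)-1}$; hence $a(x) = r_{g_1(x)}$ and $b(x) = r_{g_1(x)-1}$, and substituting into $a(x) < f(x) \leq b(x)$ yields exactly $r_{g_1(x)} < f(x) \leq r_{g_1(x)-1}$. If instead $u_{g_1(x)} \leq 0$, the same identity gives $r_{g_1(x)-1} \leq r_{g_1(x)}$, so $a(x) = r_{g_1(x)-1}$ and $b(x) = r_{g_1(x)}$, and substitution yields $r_{g_1(x)-1} \leq f(x) \leq r_{g_1(x)}$ (with both endpoints attainable in general, matching the closed inequalities obtained in the corresponding case of the theorem's proof).

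I expect no genuine obstacle here: the content of the corollary is already present, case by case, inside the proof of Theorem~\ref{thm:f.range}, and the only observation needed is the elementary monotonicity of $r_n$ in terms of the sign of $u_n$. The only small subtlety worth flagging is the $u_{g_1(x)} \leq 0$ branch, where one should note that $g_1(x) \geq 1$ (since $u_0 > 0$ by~\eqref{eq:un.init.conditions.rn.un} applied at $n=0$), so that $r_{g_1(x)-1}$ is well defined and equals $r_{g_1(x)} + u_{g_1(x)}$ without appealing to the convention $r_{-1} = 1$.
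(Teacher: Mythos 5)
Your proposal is correct and matches the paper's treatment: the corollary is stated without separate proof precisely because the case analysis inside the proof of Theorem~\ref{thm:f.range} (using $r_{g_1(x)-1} = r_{g_1(x)} + u_{g_1(x)}$, the convention $r_{-1}=1$, and $g_1(x)\geq 1$ when $u_{g_1(x)}\leq 0$) already identifies $a(x)$ and $b(x)$ exactly as you do. Your reading-off of the two cases is the intended argument.
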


\section{Continuity of the function}

To extend the definition of the function $f$ at the point $x = 0$, put
$f(0) = 0$.

\begin{theorem}
  \label{thm:f.continuous}
  The function $f$ is continuous at any point of interval $(0, 1)$,
  and it is right-continuous at the point $x = 0$, left-continuous at
  the point $x = 1$.
\end{theorem}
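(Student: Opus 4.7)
The plan rests on two tools: the iterated form of the functional equation \eqref{eq:f.func.eq},
\[
  f(x) = \sum_{j=1}^m r_{g_j(x)} \prod_{i=1}^{j-1} u_{g_i(x)} + \Bigl(\prod_{i=1}^m u_{g_i(x)}\Bigr) f(\omega^m(x)),
\]
and the uniform bound $|f(y)|\le 1$ from Lemma~\ref{lem:f.range}. With $u^*=\sup_n|u_n|<1$, the product in the remainder is bounded in modulus by $(u^*)^m$. A short preliminary check shows that each cylinder $\Delta^E_{c_1\ldots c_m}:=\{y\in(0,1]:g_i(y)=c_i,\ i\le m\}$ is a half-open interval $(\alpha,\beta]$, and that inside the parent $\Delta^E_{c_1\ldots c_{m-1}}$ the sub-cylinders are ordered left-to-right by decreasing $c_m$; in particular the rightmost sub-cylinder corresponds to $c_m=0$.

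For a non-$E$-rational $x\in(0,1)$, the point lies in the interior of every cylinder containing it, so for each $m$ there is a neighbourhood $U$ of $x$ with $g_i(y)=g_i(x)$ for all $y\in U\cap(0,1]$ and $i\le m$. Eventually $x_n\in U$, the iterated equation produces a matching principal sum, and
\[
  |f(x_n)-f(x)| = \Bigl|\prod_{i=1}^m u_{g_i(x)}\Bigr|\cdot |f(\omega^m(x_n))-f(\omega^m(x))| \le 2(u^*)^m,
\]
which is arbitrarily small. The same argument handles left-continuity at every $E$-rational point $x$, including $x=1=\Delta^E_{(0)}$: for each $k$, $x$ is the right endpoint of the rightmost sub-cylinder $\Delta^E_{g_1(x)\ldots g_k(x)}$, so $x_n\to x^-$ eventually has its first $k$ digits matching those of $x$, and the same estimate applies.

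The only delicate case is right-continuity at an $E$-rational $x=\Delta^E_{c_1\ldots c_m(0)}$ with $c_m\ge 1$ and $x<1$. Here $x_n\to x^+$ forces $x_n$ into the adjacent cylinder $\Delta^E_{c_1\ldots c_{m-1}(c_m-1)}$, whose left endpoint is $x$; so $g_i(x_n)=c_i$ for $i<m$, $g_m(x_n)=c_m-1$, and $g_{m+1}(x_n)\to\infty$ as $x_n$ tends to that left endpoint. Expanding both $f(x)$ and $f(x_n)$ by the iterated equation (using $\omega^m(x)=1$ and $f(1)=1$) and invoking the telescoping identity $r_{c_m-1}=r_{c_m}+u_{c_m}$ from \eqref{eq:un.init.conditions.rn.un}, the mismatched pair of terms cancels exactly and
\[
  f(x_n)-f(x) = \Bigl(\prod_{i=1}^{m-1} u_{c_i}\Bigr)\,u_{c_m-1}\,f(\omega^m(x_n)),
\]
while $|f(\omega^m(x_n))|\le r_{g_{m+1}(x_n)}+|u_{g_{m+1}(x_n)}|\to 0$ since $r_n,u_n\to 0$. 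Right-continuity at $0$ is handled identically: $g_1(x_n)\to\infty$ gives $|f(x_n)|\le r_{g_1(x_n)}+|u_{g_1(x_n)}|\to 0$. The main obstacle is precisely this cancellation at the $E$-rational points approached from the right; all remaining cases reduce to the tail estimate $2(u^*)^m\to 0$.
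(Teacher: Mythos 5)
Your proposal is correct, and while it follows the same basic digit-matching strategy as the paper (match the first $m$ symbols, factor out $\prod_{i\le m} u_{g_i}$, and use $\lvert f\rvert\le 1$ together with $u^*<1$ to kill the tail), it is genuinely more careful at one point, and that point is exactly where the published proof is loose. The paper treats every $x_0\in(0,1)$ uniformly by asserting that $x\to x_0$ is equivalent to $m\to\infty$, where $m$ is the first index of digit disagreement; this is true for non-$E$-rational $x_0$ and for the left approach at $E$-rational $x_0$, but it fails for the right approach at an $E$-rational point $x_0=\Delta^E_{c_1\ldots c_m(0)}$ with $c_m\ge 1$: points $x>x_0$ near $x_0$ lie in the adjacent cylinder $\Delta^E_{c_1\ldots c_{m-1}[c_m-1]}$, the disagreement index stays frozen at $m$, and the bound $(u^*)^{m-1}$ alone does not tend to $0$. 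You isolate precisely this case and close it by the exact cancellation $r_{c_m-1}=r_{c_m}+u_{c_m}$ from \eqref{eq:un.init.conditions.rn.un}, leaving $f(x_n)-f(x_0)=\bigl(\prod_{i=1}^{m-1}u_{c_i}\bigr)u_{c_m-1}f(\omega^m(x_n))$ with $f(\omega^m(x_n))\le\max\{r_{g_{m+1}(x_n)},r_{g_{m+1}(x_n)-1}\}\to 0$ as $g_{m+1}(x_n)\to\infty$ --- the same device the paper itself uses only at $x=0$, via Theorem~\ref{thm:f.range}. So your route buys a complete case analysis (and effectively repairs an imprecision in the paper's argument) at the cost of a longer write-up. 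The only cosmetic slip is the claim that $x$ is the right endpoint of $\Delta^E_{g_1(x)\ldots g_k(x)}$ ``for each $k$'': that holds only for $k\ge m$, but arbitrarily large $k$ is all the left-continuity estimate needs, so nothing breaks.
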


\begin{proof}
  Let $x_0$ be any point of $(0, 1)$.  To prove the continuity of the
  function at the point $x_0$ it is enough to show that
  \begin{equation}
    \label{eq:continuous.function}
    \lim_{x\to x_0} \lvert f(x) - f(x_0) \rvert = 0.
  \end{equation}

  If $x \neq x_0$, then there exists $m \in \N$ such that
  \[
    g_m(x) \neq g_m(x_0)
    \quad \text{but} \quad g_i(x) = g_i(x_0)
    \quad \text{for} \quad i < m.
  \]
  Then
  \begin{multline*}
    \lvert f(x) - f(x_0) \rvert
    = \biggl( \prod_{i=1}^{m-1} \lvert u_{g_i(x_0)} \rvert \biggr)
      \biggl\lvert r_{g_m(x)} + \sum_{k=m+1}^\infty
        \biggl( r_{g_k(x)} \prod_{j=m}^{k-1} u_{g_j(x)} \biggr) -{} \\
          - r_{g_m(x_0)} - \sum_{k=m+1}^\infty
            \biggl( r_{g_k(x_0)} \prod_{j=m}^{k-1} u_{g_j(x_0)} \biggr)
      \biggr\rvert.
  \end{multline*}
  Whence it follows that
  \[
    \lvert f(x) - f(x_0) \rvert
    < \prod_{i=1}^{m-1} \lvert u_{g_i(x_0)} \rvert
    \leq (u^*)^{m-1} \to 0 \quad (m \to \infty),
  \]
  where $u^* = \max \{ \lvert u_0 \rvert, \lvert u_1 \rvert, \ldots,
  \lvert u_n \rvert, \ldots \} < 1$.

  Since condition $x \to x_0$ is equivalent to condition $m \to
  \infty$, equality~\eqref{eq:continuous.function} holds.  Hence $f$
  is continuous at the point $x_0$.

  To prove that $f$ is left-continuous at the point $x = 1$ we use
  analogous arguments.

  Condition $x \neq 1$ means that there exists $m$ such that $g_m(x_0)
  \neq 0$.  At the same time $g_i(x) = 0$ for $i < m$.

  Condition $x \to 1-0$ is equivalent to condition $m \to \infty$.
  Since
  \[
    \lvert f(x) - f(1) \rvert
    < \prod_{i=1}^{m-1} \lvert u_{g_i(1)} \rvert
    \leq u_0^{m-1} \to 0 \quad (m \to \infty),
  \]
  we see that function $f$ is left-continuous at the point $x = 1$.

  Consider point $x = 0$.  Condition $x \to 0+0$ is equivalent to
  condition $g_1(x) \to \infty$.  Hence,
  \[
    \lvert f(x) - f(0) \rvert = f(x)
    \leq b(x) = \max \{ r_{g_1(x)}, r_{g_1(x)-1} \} \to 0,
  \]
  as $g_1(x) \to \infty$.  Thus $f$ is right-continuous at the point
  $x = 0$.
\end{proof}

\begin{corollary}
  The range of the function $f$ is a closed interval $[0, 1]$.
\end{corollary}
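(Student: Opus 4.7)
The plan is to deduce this corollary from the intermediate value theorem, using the results already established just above. By Lemma~\ref{lem:f.range}, the range of $f$ is contained in $[0,1]$, so only the reverse inclusion requires work.

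First I would collect the two endpoint values explicitly: $f(0)=0$ by the extension convention adopted just before Theorem~\ref{thm:f.continuous}, and $f(1)=1$ from the computation $f(\Delta^E_{(0)})=r_0/(1-u_0)=1$ carried out immediately after the definition of $f$. Next, Theorem~\ref{thm:f.continuous} asserts that $f$ is continuous on the open interval $(0,1)$, right-continuous at $0$, and left-continuous at $1$; taken together, this gives continuity of $f$ on the whole closed interval $[0,1]$.

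With continuity and the two endpoint values in hand, the intermediate value theorem applied to $f$ on $[0,1]$ shows that every value $y\in[0,1]$ is attained, i.e.\ $[0,1]\subseteq f([0,1])$. Combining with the inclusion from Lemma~\ref{lem:f.range} yields $f([0,1])=[0,1]$, as required.

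There is essentially no obstacle here: the only point worth being careful about is to make sure one is using continuity on the closed interval, which is what the full statement of Theorem~\ref{thm:f.continuous} (including one-sided continuity at the endpoints) provides. No additional properties of the Engel expansion or of the sequence $(u_n)$ are needed beyond what has already been recorded.
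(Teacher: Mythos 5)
Your argument is correct and is exactly the (implicit) argument the paper intends: the corollary is stated without proof, and the standard route is continuity of $f$ on $[0,1]$ (Theorem~\ref{thm:f.continuous}), the endpoint values $f(0)=0$, $f(1)=1$, the inclusion from Lemma~\ref{lem:f.range}, and the intermediate value theorem. Nothing is missing.
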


\section{Functional relations}

\begin{lemma}
  \label{lem:f.solution.func.eqs}
  The function $f$ defined by equality~\eqref{eq:f.series} is a unique
  solution of the system of functional equations
  \begin{equation}
    \label{eq:f.func.eqs}
    f(x) = r_i + u_i f(\omega(x)),
    \quad i = 0, 1, 2, \ldots,
  \end{equation}
  in the class of bounded functions defined at every point of $(0,
  1]$.
\end{lemma}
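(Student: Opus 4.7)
The plan is to prove existence and uniqueness separately. Existence is essentially free: the functional equation $f(x) = r_{g_1(x)} + u_{g_1(x)} f(\omega(x))$ was already established in the lemma preceding Theorem~\ref{thm:f.range}, and since $i = g_1(x)$ is uniquely determined by $x$, verifying the equation for the single index $i = g_1(x)$ is exactly what that lemma gives. I would simply cite that lemma and note that the $f$ of~\eqref{eq:f.series} is bounded (by $[0,1]$, via Lemma~\ref{lem:f.range}), so it lies in the declared class.

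For uniqueness, suppose $\varphi_1$ and $\varphi_2$ are two bounded functions on $(0,1]$ satisfying~\eqref{eq:f.func.eqs}, and set $h = \varphi_1 - \varphi_2$ and $M = \sup_{x \in (0,1]} |h(x)| < \infty$. From~\eqref{eq:f.func.eqs} applied at the index $i = g_1(x)$ we get $h(x) = u_{g_1(x)} h(\omega(x))$. Iterating this identity $n$ times along the shifts $\omega, \omega^2, \ldots, \omega^n$ yields
\[
  h(x) = \biggl( \prod_{j=1}^{n} u_{g_j(x)} \biggr) h(\omega^n(x)).
\]
Using $|u_n| \leq u^* < 1$ (from the definition of $u^*$ used earlier in the text) and $|h(\omega^n(x))| \leq M$, we obtain $|h(x)| \leq (u^*)^n M$ for every $n \in \N$. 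Letting $n \to \infty$ gives $h(x) = 0$ for all $x \in (0,1]$, so $\varphi_1 \equiv \varphi_2$.

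Two minor points I would handle carefully. First, the iteration requires $\omega^n(x)$ to remain in $(0,1]$, which follows from the Engel theorem since every $x \in (0,1]$ has an $E$-representation with infinitely many terms, so $\omega(x) \in (0,1]$ again. Second, the system in the statement is written with a free index $i$, but each $x$ only ever triggers the equation for $i = g_1(x)$; I would remark on this at the start so the reader sees that~\eqref{eq:f.func.eqs} is exactly equivalent to~\eqref{eq:f.func.eq}.

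I do not expect a genuine obstacle: the whole argument is a one-line contraction estimate built on the bound $u^* < 1$, together with the boundedness hypothesis that replaces any need for continuity. The only thing requiring care is keeping the distinction between the free index $i$ in the statement and the forced index $g_1(x)$ that actually appears upon evaluating at $x$.
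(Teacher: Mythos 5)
Your proposal is correct and rests on the same mechanism as the paper's proof: iterate the shift relation $n$ times along $\omega, \omega^2, \ldots$ and use boundedness together with $\lvert u_{g_j(x)} \rvert \le u^* < 1$ to make the remainder vanish. The only cosmetic difference is packaging: the paper iterates~\eqref{eq:f.func.eqs} on an arbitrary bounded solution itself, obtaining the partial sum of~\eqref{eq:f.series} plus a remainder $\bigl(\prod_{i=1}^m u_{g_i}\bigr) f(\omega^m(x)) \to 0$ and thus identifying every bounded solution with the series, whereas you split off existence (citing the earlier functional-equation lemma and Lemma~\ref{lem:f.range}) and run the same iteration on the difference of two solutions.
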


\begin{proof}
  From the fact that $f$ satisfies system~\eqref{eq:f.func.eqs} for
  any $x \in (0, 1]$ it follows that
  \begin{align*}
    f(x) &= f(\Delta^E_{g_1(x)g_2(x)\ldots g_n(x)\ldots})
    = r_{g_1} + u_{g_1} f(\Delta^E_{g_2(x)g_3(x)\ldots g_n(x)\ldots}) = \\
    &= r_{g_1} + r_{g_2} u_{g_1} + u_{g_1} u_{g_2}
      f(\Delta^E_{g_3(x)g_4(x)\ldots g_n(x)\ldots}) = \ldots = \\
    &= r_{g_1} + \sum_{k=2}^m r_{g_k} \prod_{i=1}^{k-1} u_{g_i}
      + \biggl( \prod_{i=1}^m u_{g_i} \biggr)
      f(\Delta^E_{g_{m+1}(x)g_{m+2}(x)\ldots g_{m+n}(x)\ldots}).
  \end{align*}

  Since $\prod\limits_{i=1}^m u_{g_i} \to 0$ for $m \to \infty$ and
  $f$ is bounded and defined at every point of $(0, 1]$ (in
  particular, expression $f(\Delta^E_{g_{m+1}(x)g_{m+2}(x)\ldots
    g_{m+n}(x)\ldots})$ is well defined), we see that remainder
  \[
    \biggl( \prod_{i=1}^m u_{g_i} \biggr)
      f(\Delta^E_{g_{m+1}(x)g_{m+2}(x)\ldots g_{m+n}(x)\ldots}) \to 0
  \]
  as $m \to 0$.  Thus solution of system~\eqref{eq:f.func.eqs} can be
  expressed by series~\eqref{eq:f.series} uniquely defining function
  $f$.
\end{proof}

\begin{corollary}
  The function $f$ is a unique continuous solution of system of
  functional equations~\eqref{eq:f.func.eqs}.
\end{corollary}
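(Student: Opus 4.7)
The plan is to derive this corollary from Lemma~\ref{lem:f.solution.func.eqs} almost immediately, since every continuous solution of~\eqref{eq:f.func.eqs} is automatically bounded. Existence is settled by Theorem~\ref{thm:f.continuous}, which shows that $f$, extended by $f(0)=0$, is continuous on $[0,1]$ and therefore does provide a continuous solution.

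For uniqueness, I would take an arbitrary continuous solution $g$ of~\eqref{eq:f.func.eqs} and consider the difference $h := f - g$. Since both $f$ and $g$ satisfy the same system with the same inhomogeneity $r_i$, subtraction yields the homogeneous relation $h(x) = u_{g_1(x)}\, h(\omega(x))$ on $(0,1]$. Iterating this gives
\[
  h(x) = \biggl( \prod_{i=1}^m u_{g_i(x)} \biggr) h(\omega^m(x))
  \quad \text{for every } m \in \N,\ x \in (0,1].
\]
If $g$ is bounded, so is $h$, and the estimate $\bigl\lvert \prod_{i=1}^m u_{g_i(x)} \bigr\rvert \leq (u^*)^m \to 0$ with $u^* < 1$ forces $h \equiv 0$; equivalently, one just appeals to the uniqueness clause of Lemma~\ref{lem:f.solution.func.eqs} once boundedness is in place.

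The only point requiring attention is therefore boundedness of a continuous solution, and I expect this to be the single (mild) obstacle. Reading ``continuous'' as continuity on the compact interval $[0,1]$ makes boundedness automatic and the proof is essentially one line on top of the lemma. Under the weaker reading of continuity only on $(0,1]$, boundedness near $0$ has to be extracted from~\eqref{eq:f.func.eq} itself: $g$ is bounded on every $[\varepsilon,1]$ by compactness, and since $g_1(x) \to \infty$ as $x \to 0+$ one has $r_{g_1(x)} \to 0$ and $\lvert u_{g_1(x)} \rvert \to 0$, which together with the functional equation controls $g$ on $(0,\varepsilon]$. Either way the conclusion $g=f$ then reduces to Lemma~\ref{lem:f.solution.func.eqs}.
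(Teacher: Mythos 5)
Your argument is correct and is essentially the paper's own (implicit) derivation: existence comes from Theorem~\ref{thm:f.continuous}, and since a continuous function on the compact interval $[0,1]$ is bounded, uniqueness is exactly the bounded-class uniqueness of Lemma~\ref{lem:f.solution.func.eqs} (your iteration of $h=f-g$ simply replays the lemma's proof). One caution about your optional aside: under the weaker reading of continuity only on $(0,1]$, the one-step functional equation does not by itself control $g$ on $(0,\varepsilon]$, because the shift $\omega$ maps points arbitrarily close to $0$ onto all of $(0,1]$ (so $\omega(x)$ may again lie near $0$ where no bound is yet available), hence boundedness near $0$ would require a further argument --- but that case is not needed for the corollary as intended.
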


\begin{remark}
  Lemma~\ref{lem:f.solution.func.eqs} gives an equivalent definition
  of the function $f$ as a continuous solution of system of functional
  equations~\eqref{eq:f.func.eqs}.
\end{remark}

\begin{remark}
  Condition~\eqref{eq:un.init.conditions.sum1} provides that function
  $f$ is bounded and its range is an interval $[0, 1]$.  If
  condition~\eqref{eq:un.init.conditions.abs1} is not fulfilled, then
  system of functional equations~\eqref{eq:f.func.eqs} does not have
  solutions in the class of functions defined on $(0, 1]$, because
  series is divergent for some $x \in (0, 1]$.  If
  condition~\eqref{eq:un.init.conditions.rn.un} is not fulfilled, the
  function defined by~\eqref{eq:f.series} is not continuous.
  Accordance of sequences $(u_i)$ and $(r_i)$ provide continuity of
  solution, and if there is no accordance, then solution is an
  discontinuous function even if it exists.
\end{remark}

\section{Monotonicity intervals}

Let us recall the definition of useful notion of cylinder for
$E$-representation of number.

A \emph{cylinder of rank $m$ with base $c_1c_2\ldots c_m$} is the set
$\Delta^E_{c_1c_2\ldots c_m}$ of all numbers $x \in (0, 1]$ having
$E$-representation with first $m$ symbols $c_1$, $c_2$, \ldots, $c_m$
respectively, i.e.,
\[
  \Delta^E_{c_1c_2\ldots c_m} = \left\{ x \colon g_i(x) = c_i, \;
    i = \overline{1,m} \right\}.
\]

It is known~\cite{Pratsiovytyi:2006:REZ:en} that cylinder
$\Delta^E_{c_1c_2\ldots c_m}$ is a left-open interval with endpoints
\begin{align*}
  a_m &= \Delta^E_{c_1c_2\ldots c_{m-1}[c_m+1](0)}
  = \sum_{k=1}^m
    \frac{1}{(2+\sigma_1)(2+\sigma_2)\ldots(2+\sigma_k)}, \\
  b_m &= \Delta^E_{c_1c_2\ldots c_m(0)}
  = a_m + \frac{1}{(2+\sigma_1)(2+\sigma_2)\ldots
    (2+\sigma_m)(1+\sigma_m)},
\end{align*}
where $\sigma_k \equiv c_1 + c_2 + \ldots + c_k$.

As we can see the endpoints of cylinder $\Delta^E_{c_1c_2\ldots c_m}$
are $E$-rational points:
\[
  \Delta^E_{c_1c_2\ldots c_{m-1}[c_m+1](0)}
  \quad \text{and} \quad
  \Delta^E_{c_1c_2\ldots c_{m-1}c_m(0)}.
\]

\begin{remark}
  \label{rem:E.rational.cylinders}
  Any $E$-rational point is a common point of two cylinders of some
  rank belonging to the same cylinder of previous rank (we consider
  that $(0, 1]$ is a cylinder of zero rank).
\end{remark}

The cylinders have the following properties:
\begin{enumerate}
\item $\Delta^E_{c_1c_2\ldots c_m} = \bigcup\limits_{c=0}^\infty
  \Delta^E_{c_1c_2\ldots c_mc}$;
\item $\max \Delta^E_{c_1c_2\ldots c_m(c+1)} = \inf
  \Delta^E_{c_1c_2\ldots c_mc}$;
\item $\lvert \Delta^E_{c_1c_2\ldots c_m} \rvert =
  \dfrac{1}{(2+\sigma_1)(2+\sigma_2)\ldots(2+\sigma_m)(1+\sigma_m)}$;
\item $\dfrac{\lvert\Delta^E_{c_1c_2\ldots
      c_mc_{m+1}}\rvert}{\lvert\Delta^E_{c_1c_2\ldots c_m}\rvert} =
  \dfrac{1+\sigma_m}{(2+\sigma_{m+1})(1+\sigma_{m+1})}$;
\item For any sequence $(c_n)$, $c_n \in \Z_0$, the following equality
  holds:
  \[
    \bigcap_{m=1}^\infty \Delta^E_{c_1 c_2 \ldots c_m}
    \equiv \Delta^E_{c_1c_2\ldots c_m\ldots} = x \in (0, 1].
  \]
\end{enumerate}

\begin{lemma}
  \label{lem:f.const.cylinder}
  If $u_p = 0$, then function $f$ is constant on every cylinder
  $\Delta^E_{c_1c_2\ldots c_mp}$.
\end{lemma}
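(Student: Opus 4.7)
The plan is to apply the functional equation~\eqref{eq:f.func.eq} repeatedly $m+1$ times to expose the factor $u_p$, which by hypothesis vanishes.

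Fix $x \in \Delta^E_{c_1 c_2 \ldots c_m p}$, so that $g_1(x) = c_1, \ldots, g_m(x) = c_m$ and $g_{m+1}(x) = p$, while the tail $g_{m+2}(x), g_{m+3}(x),\ldots$ may be arbitrary. First I would iterate the identity $f(x) = r_{g_1(x)} + u_{g_1(x)} f(\omega(x))$ exactly $m+1$ times. Writing $\omega^k$ for the $k$th iterate of $\omega$ and noting that $g_1(\omega^{k-1}(x)) = g_k(x)$, this yields
\[
    f(x) = r_{c_1} + \sum_{k=2}^{m+1} r_{g_k(x)} \prod_{i=1}^{k-1} u_{g_i(x)}
    + \biggl( \prod_{i=1}^{m+1} u_{g_i(x)} \biggr) f(\omega^{m+1}(x)).
\]
Substituting $g_i(x) = c_i$ for $i \le m$ and $g_{m+1}(x) = p$, this becomes
\[
    f(x) = r_{c_1} + \sum_{k=2}^{m} r_{c_k} \prod_{i=1}^{k-1} u_{c_i}
    + r_p \prod_{i=1}^{m} u_{c_i}
    + u_p \biggl( \prod_{i=1}^{m} u_{c_i} \biggr) f(\omega^{m+1}(x)).
\]

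Now I would invoke the assumption $u_p = 0$: the last term drops out entirely, and the remaining expression depends only on the fixed digits $c_1, \ldots, c_m, p$, not on the tail of the $E$-representation of $x$. Hence $f$ takes the same value at every $x \in \Delta^E_{c_1c_2\ldots c_m p}$.

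There is no real obstacle; the only point that requires a line of justification is that the functional equation~\eqref{eq:f.func.eq} may be iterated $m+1$ times (which is a trivial induction, using at each step that $\omega^{k}(x) \in (0,1]$ so that $f$ is defined there and the identity applies), and that the term $u_p \bigl(\prod_{i=1}^{m} u_{c_i}\bigr) f(\omega^{m+1}(x))$ is literally $0$ regardless of the value of $f(\omega^{m+1}(x))$, which is guaranteed to be finite by Lemma~\ref{lem:f.range}. Alternatively, one could argue directly from the series~\eqref{eq:f.series} by observing that for every $k \ge m+2$ the product $\prod_{i=1}^{k-1} u_{g_i(x)}$ contains the factor $u_{g_{m+1}(x)} = u_p = 0$, so the tail of the series collapses; this gives the same closed form and makes the constancy manifest.
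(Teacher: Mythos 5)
Your proof is correct. The paper's own argument is precisely your ``alternative'' at the end: it works directly from the series~\eqref{eq:f.series}, observing that every term with index $k \ge m+2$ contains the factor $u_{g_{m+1}(x)} = u_p = 0$, which gives the same closed form $f(x) = r_{c_1} + \sum_{k=2}^m r_{c_k}\prod_{i=1}^{k-1}u_{c_i} + r_p\prod_{i=1}^m u_{c_i}$. Your primary route, iterating the functional equation~\eqref{eq:f.func.eq} $m+1$ times, is only a repackaging of that computation (the iteration regenerates the partial sums of the defining series with a remainder killed by $u_p=0$), so the two proofs are essentially the same and both are complete.
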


\begin{proof}
  Since $E$-representation of any number $x \in \Delta^E_{c_1c_2\ldots
    c_mp}$ has a form
  \[
    x = \Delta^E_{c_1c_2\ldots c_mpg_{m+2}g_{m+3}\ldots},
  \]
  we have
  \[
    f(x) = r_{c_1} + \sum_{k=2}^m
      \biggl( r_{c_k} \prod_{i=1}^{k-1} u_{c_i} \biggr)
      + r_p \prod_{i=1}^m u_{c_i} + 0,
  \]
  because $\prod\limits_{i=1}^{m+k} u_{g_i(x)} = 0$ for all $k \in
  \N$.
\end{proof}

\begin{corollary}
  If $(c_1, c_2, \ldots, c_m)$ is any tuple of non-negative integers
  such that
  \[
    u_{c_1} u_{c_2} \ldots u_{c_m} = 0,
  \]
  then function $f$ is constant on cylinder $\Delta^E_{c_1c_2\ldots
    c_m}$.
\end{corollary}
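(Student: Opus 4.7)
The plan is to reduce to Lemma~\ref{lem:f.const.cylinder} by locating the first zero factor in the product. Since $u_{c_1} u_{c_2} \cdots u_{c_m} = 0$, there is a smallest index $j \in \{1, 2, \ldots, m\}$ with $u_{c_j} = 0$. Setting $p := c_j$, Lemma~\ref{lem:f.const.cylinder} (applied with base $c_1 c_2 \ldots c_{j-1}$; if $j = 1$, the base is empty and the cylinder is simply $\Delta^E_{p}$) yields that $f$ is constant on $\Delta^E_{c_1 c_2 \ldots c_{j-1} c_j}$.

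Next I would invoke the nesting of cylinders given by property~(1) in the list following Remark~\ref{rem:E.rational.cylinders}: iterating the decomposition $\Delta^E_{d_1 \ldots d_k} = \bigcup_{c=0}^{\infty} \Delta^E_{d_1 \ldots d_k c}$ shows that every rank-$m$ cylinder whose base extends $c_1 c_2 \ldots c_j$ is contained in $\Delta^E_{c_1 c_2 \ldots c_j}$. In particular,
\[
    \Delta^E_{c_1 c_2 \ldots c_m} \subseteq \Delta^E_{c_1 c_2 \ldots c_j},
\]
and since $f$ is already constant on the larger cylinder, it is a fortiori constant on $\Delta^E_{c_1 c_2 \ldots c_m}$.

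There is essentially no obstacle here: once the zero factor is isolated, the argument is a one-line inclusion. The only tiny care needed is the edge case $j = 1$, where Lemma~\ref{lem:f.const.cylinder} must be read with an empty prefix, meaning simply that $f$ is constant on $\Delta^E_{p}$ whenever $u_p = 0$; this is an immediate specialization of the lemma's proof. Alternatively, one could prove the corollary directly from formula~\eqref{eq:f.series} by noting that all terms with index $k > j$ carry a factor $u_{c_j} = 0$, so $f(x)$ reduces on $\Delta^E_{c_1 c_2 \ldots c_m}$ to the finite sum $r_{c_1} + \sum_{k=2}^{j} r_{c_k} \prod_{i=1}^{k-1} u_{c_i}$, which depends only on the fixed digits $c_1, \ldots, c_j$ and not on $x$.
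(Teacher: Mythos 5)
Your proposal is correct and is essentially the argument the paper intends: the corollary is stated as an immediate consequence of Lemma~\ref{lem:f.const.cylinder}, and isolating the first zero factor $u_{c_j}=0$, applying the lemma, and using the nesting $\Delta^E_{c_1\ldots c_m}\subseteq\Delta^E_{c_1\ldots c_j}$ is exactly that immediate derivation (your alternative direct computation from~\eqref{eq:f.series} is also just the lemma's own proof). Your handling of the edge case $j=1$ is fine.
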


\begin{theorem}
  \label{thm:f.maxmin}
  The function $f$ takes the maximal and minimal values at endpoints
  of cylinder $\Delta^E_{c_1c_2\ldots c_m}$.  Moreover, if
  \[
    D_m \equiv \prod_{i=1}^m u_{c_i} \neq 0,
    \quad
    y_m = r_{c_1} + \sum_{k=2}^m \biggl( r_{c_k}
      \prod_{i=1}^{k-1} u_{c_i} \biggr),
  \]
  then for $D_m > 0$ we have
  \begin{align*}
    \max f(x) &= f(\Delta^E_{c_1c_2\ldots c_m(0)}) = y_m + D_m, \\
    \min f(x) &= f(\Delta^E_{c_1c_2\ldots c_{m-1}[c_m+1](0)}) = y_m;
  \end{align*}
  and for $D_m < 0$ we have
  \begin{align*}
    \max f(x) &= f(\Delta^E_{c_1c_2\ldots c_{m-1}[c_m+1](0)}) = y_m, \\
    \min f(x) &= f(\Delta^E_{c_1c_2\ldots c_m(0)}) = y_m + D_m;
  \end{align*}
\end{theorem}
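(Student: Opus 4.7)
The plan is to reduce the statement to a single identity via the functional equation. For any $x = \Delta^E_{c_1c_2\ldots c_m g_{m+1}g_{m+2}\ldots}$ in the cylinder $\Delta^E_{c_1c_2\ldots c_m}$, iterating~\eqref{eq:f.func.eq} $m$ times peels off the first $m$ digits to give
\begin{equation*}
  f(x) = y_m + D_m \cdot f(\omega^m(x)),
\end{equation*}
where $\omega^m$ denotes the $m$-fold shift. By Lemma~\ref{lem:f.range} (and the continuous extension $f(0)=0$), the tail value $f(\omega^m(x))$ lies in $[0,1]$, so $f(x)$ is trapped between $y_m$ and $y_m + D_m$; the orientation of this trapping is determined by the sign of $D_m$. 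This immediately shows that $f$ is bounded on the cylinder by the two candidate endpoint values.

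Next I would identify which $x$ realize the extremes. The upper (resp.\ lower) tail value $f(\omega^m(x)) = 1$ corresponds to $\omega^m(x) = 1 = \Delta^E_{(0)}$, i.e., $g_{m+k}(x) = 0$ for all $k \geq 1$, which singles out $x = \Delta^E_{c_1\ldots c_m(0)}$; this gives $f$-value $y_m + D_m$. If $D_m > 0$, this is the required maximum; if $D_m < 0$, it is the required minimum.

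The more delicate endpoint is $a_m = \Delta^E_{c_1\ldots c_{m-1}[c_m+1](0)}$, because $a_m \notin \Delta^E_{c_1\ldots c_m}$ (its $E$-representation starts with the digit $c_m + 1$), so the iterated formula above cannot be applied directly. I would handle it by a direct evaluation via~\eqref{eq:f.E.rational}, obtaining
\begin{equation*}
  f(a_m) = y_{m-1} + \Bigl(r_{c_m+1} + u_{c_m+1}\Bigr)\prod_{i=1}^{m-1} u_{c_i},
\end{equation*}
and then invoking the key telescoping identity $u_{c_m+1} + r_{c_m+1} = r_{c_m}$, which is immediate from~\eqref{eq:un.init.conditions.rn.un}. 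This collapses the expression to exactly $y_m$, confirming that $f(a_m)$ is the appropriate extremum (minimum for $D_m>0$, maximum for $D_m<0$) and is consistent with the one-sided limit $\lim_{x\to a_m^+} f(x) = y_m$ guaranteed by Theorem~\ref{thm:f.continuous}.

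The main obstacle is precisely this subtlety: the left endpoint $a_m$ sits in the adjacent cylinder $\Delta^E_{c_1\ldots c_{m-1}[c_m+1]}$ rather than in $\Delta^E_{c_1\ldots c_m}$, so the clean recursion $f(x) = y_m + D_m f(\omega^m(x))$ is unavailable there. All the bookkeeping rides on the identity $u_j + r_j = r_{j-1}$, which is what makes the piecewise pieces match up continuously at $E$-rational boundary points and what reconciles the direct computation at $a_m$ with the limiting behavior as $x \to a_m^+$ from within the cylinder.
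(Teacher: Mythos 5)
Your argument is essentially the paper's own proof: iterate the functional equation~\eqref{eq:f.func.eq} to get $f(x) = y_m + D_m f(\omega^m(x))$, bound the tail factor in $[0,1]$ via Lemma~\ref{lem:f.range}, and read off where the bounds are attained. Your extra step for the left endpoint $a_m = \Delta^E_{c_1\ldots c_{m-1}[c_m+1](0)}$ --- direct evaluation through~\eqref{eq:f.E.rational} together with $u_{c_m+1} + r_{c_m+1} = r_{c_m}$ --- is a welcome refinement, since the paper simply identifies the infimum with ``$f(\omega^m(x))=0$ at $x=a_m$'' even though $a_m$ lies outside the (left-open) cylinder and its own tail value is $f(1)=1$, not $0$.
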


\begin{proof}
  Since
  \[
    f(x) = y_m + D_m f(\omega^m(x)),
  \]
  where
  \[
    0 \leq f(\omega^m(x)) = r_{g_{m+1}(x)} + \sum_{k=m+2}^\infty
      \biggl( r_{g_k(x)} \prod_{i=m+1}^{k-1} u_{g_i(x)} \biggr) \leq 1,
  \]
  we see that for $D_m > 0$ function $f$ takes the maximal value if
  ${f(\omega^m(x)) = 1}$, i.e., if $x = \Delta^E_{c_1c_2\ldots c_m(0)}$,
  and minimal value if $f(\omega^m(x)) = 0$, i.e., if $x =
  \Delta^E_{c_1c_2\ldots c_{m-1}[c_m+1](0)}$.

  For $D_m < 0$ we have the opposite situation.
\end{proof}

\begin{corollary}
  \label{cor:f.change}
  Change in function $f$ on cylinder $\Delta^E_{c_1c_2\ldots c_m}$
  \[
    \mu_f(\Delta^E_{c_1c_2\ldots c_m})
    \equiv f(\Delta^E_{c_1c_2\ldots c_{m-1}c_m(0)})
      - f(\Delta^E_{c_1c_2\ldots c_{m-1}[c_m+1](0)})
  \]
  can be calculated by formula
  \begin{equation}
    \label{eq:f.change}
    \mu_f(\Delta^E_{c_1c_2\ldots c_m}) = \prod_{i=1}^m u_{c_i}.
  \end{equation}
\end{corollary}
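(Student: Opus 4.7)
The plan is to read the two endpoint values of the cylinder straight off Theorem~\ref{thm:f.maxmin} and substitute them into the definition of $\mu_f$. Recall that $\mu_f(\Delta^E_{c_1c_2\ldots c_m})$ is defined as the \emph{signed} difference
\[
  f(\Delta^E_{c_1\ldots c_{m-1}c_m(0)}) - f(\Delta^E_{c_1\ldots c_{m-1}[c_m+1](0)}),
\]
not as $\max f - \min f$, so the sign of $D_m$ will take care of itself.

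First I would handle the generic case $D_m \neq 0$. The theorem tells us that the two quantities $f(\Delta^E_{c_1\ldots c_{m-1}c_m(0)})$ and $f(\Delta^E_{c_1\ldots c_{m-1}[c_m+1](0)})$ are, in some order, $y_m + D_m$ and $y_m$. When $D_m > 0$ the first is the max $y_m + D_m$ and the second is the min $y_m$; when $D_m < 0$ the first is the min $y_m + D_m$ and the second is the max $y_m$. In either subcase a one-line subtraction gives
\[
  \mu_f(\Delta^E_{c_1\ldots c_m}) = (y_m + D_m) - y_m = D_m = \prod_{i=1}^m u_{c_i},
\]
which is precisely formula~\eqref{eq:f.change}.

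The remaining case $D_m = 0$ is covered by the corollary to Lemma~\ref{lem:f.const.cylinder}: the hypothesis $u_{c_1}u_{c_2}\ldots u_{c_m}=0$ forces $f$ to be constant on the whole cylinder $\Delta^E_{c_1\ldots c_m}$, hence on its closure, so the two endpoint values coincide and $\mu_f = 0 = D_m$. Thus~\eqref{eq:f.change} holds uniformly in the sign of $D_m$.

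There is no real obstacle here; the only step requiring care is the bookkeeping check that in the $D_m<0$ branch of Theorem~\ref{thm:f.maxmin} the labels ``max'' and ``min'' are attached to the correct endpoints, so that the subtraction produces $+D_m$ rather than $-D_m$ or $|D_m|$. Once that is verified, the corollary is an immediate consequence of the theorem combined with the constancy lemma.
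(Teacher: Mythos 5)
Your proof is correct and follows essentially the same route as the paper, which states this as an immediate consequence of Theorem~\ref{thm:f.maxmin}: the two endpoint values are $y_m+D_m$ and $y_m$ in either sign case, so the signed difference is $D_m=\prod_{i=1}^m u_{c_i}$. Your explicit treatment of the degenerate case $D_m=0$ via the constancy corollary (extended to the left endpoint by continuity, Theorem~\ref{thm:f.continuous}) is a careful addition the paper leaves implicit.
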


\begin{corollary}
  \label{cor:f.const}
  If there are no zeroes among terms of sequence $(u_n)$, then
  function $f$ does not have constancy intervals.
\end{corollary}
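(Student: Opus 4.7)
The plan is to argue by contradiction: suppose $f$ is constant on some nondegenerate interval $(\alpha,\beta)\subset(0,1]$, and derive a contradiction from Corollary~\ref{cor:f.change}, which says the oscillation of $f$ on a rank-$m$ cylinder $\Delta^E_{c_1c_2\ldots c_m}$ equals the product $\prod_{i=1}^m u_{c_i}$. Under the hypothesis that no $u_n$ vanishes, this product is nonzero, so it would suffice to exhibit a cylinder of some rank contained in $(\alpha,\beta)$.

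To exhibit such a cylinder, I would first choose a point $x_0\in(\alpha,\beta)$ that is \emph{not} $E$-rational; this is possible because the $E$-rational numbers form a countable set while $(\alpha,\beta)$ is uncountable. Let $(c_n)$ denote the $E$-digits of $x_0$ and consider the nested sequence of cylinders $\Delta^E_{c_1\ldots c_m}$, each of which contains $x_0$. By property~(3) of cylinders,
\[
  \bigl\lvert \Delta^E_{c_1\ldots c_m}\bigr\rvert
  = \frac{1}{(2+\sigma_1)(2+\sigma_2)\cdots(2+\sigma_m)(1+\sigma_m)}
  \leq \frac{1}{2^{m+1}} \xrightarrow[m\to\infty]{} 0,
\]
so the cylinder lengths tend to zero. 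Since the endpoints of every cylinder are $E$-rational and $x_0$ is not, $x_0$ lies strictly interior to each $\Delta^E_{c_1\ldots c_m}$; combined with the vanishing lengths this forces, for all sufficiently large $m$, the inclusion $\Delta^E_{c_1\ldots c_m}\subset(\alpha,\beta)$. On this cylinder $f$ is constant by assumption, so its oscillation vanishes, contradicting Corollary~\ref{cor:f.change} since $\prod_{i=1}^m u_{c_i}\neq0$.

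The only delicate step is the containment $\Delta^E_{c_1\ldots c_m}\subset(\alpha,\beta)$ for large $m$; this is where the choice of $x_0$ as non-$E$-rational matters, so that $x_0$ cannot sit arbitrarily close to a cylinder endpoint for infinitely many $m$. Everything else is a direct appeal to the oscillation formula already established.
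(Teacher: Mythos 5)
Your proof is correct and follows essentially the same route as the paper's: assume an interval of constancy, locate a cylinder $\Delta^E_{c_1\ldots c_m}$ inside it, and contradict Corollary~\ref{cor:f.change}, whose value $\prod_{i=1}^m u_{c_i}$ is nonzero when no $u_n$ vanishes. The paper merely asserts that such a cylinder exists, while you supply the routine detail via nested cylinders of vanishing length; note only that the non-$E$-rationality of $x_0$ is not actually needed (membership $x_0\in\Delta^E_{c_1\ldots c_m}$ together with $\lvert\Delta^E_{c_1\ldots c_m}\rvert\to 0$ already forces the inclusion for large $m$), and the length bound is $2^{-m}$ rather than $2^{-(m+1)}$ when all digits vanish, a harmless slip.
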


\begin{proof}
  Indeed, suppose that under conditions of this proposition there
  exists interval $(a, b)$, where $f$ is constant. Then it is easy to
  find cylinder $\Delta^E_{c_1c_2\ldots c_m}$ completely belonging to
  $(a, b)$.  Hence, by Corollary~\ref{cor:f.change}, change in
  function $f$ on this cylinder as well as on interval $(a, b)$ is
  nonzero.  This contradiction completes the proof.
\end{proof}

\section{Lebesgue structure of the function}

The known Lebesgue theorem~\cite{Hennequin:1965:TPQ} states that any
function of bounded variation (in particular, probability distribution
function) can be represented in the form of linear combination
\begin{equation}
  \label{eq:Lebesgue.structure}
  F(x) = \alpha_1 F_d(x) + \alpha_2 F_{a.c.}(x) + \alpha_3 F_{s}(x),
\end{equation}
where $\alpha_i \geq 0$, $\alpha_1 + \alpha_2 + \alpha_3 = 1$, $F_d$,
$F_{a.c.}$, $F_{s}$ is discrete, absolutely continuous and singular
function respectively, i.e.,
\begin{enumerate}
\item $F_d$ is a function increasing only by jumps (jump function);
\item $F_{a.c.}(x) = \int\limits_{-\infty}^x F'(t) dt$;
\item $F_s$ is a continuous function but $F_s'(x) = 0$ Lebesgue-almost
  everywhere.
\end{enumerate}
Equality~\eqref{eq:Lebesgue.structure} is called Lebesgue structure of
function $F$.

Moreover, if one of the numbers $\alpha_i$ is equal to $1$, then
function is called pure.  If $\alpha_1 = 1$, then it is called pure
discrete, if $\alpha_2 = 1$, then it is called pure absolutely
continuous, if $\alpha_3 = 1$, then it is called pure singular (or
singularly continuous).

\begin{theorem}
  If all terms of sequence $(u_n)$ are non-negative, then $f$ is
  \begin{enumerate}
  \item\label{item:f.pdf.1} a probability distribution function on
    $[0, 1]$, moreover, this is a distribution function of random
    variable $\xi = \Delta^{E}_{\eta_1\eta_2\ldots\eta_k\ldots}$ such
    that its $E$-symbols $\eta_k$ are independent identically
    distributed random variables having the distribution $\Prob\{
    \eta_k = n \} = u_n$;
  \item\label{item:f.pdf.2} a strictly increasing function if $u_n >
    0$ for any $n \in \Z_0$;
  \item\label{item:f.pdf.3} a pure absolutely continuous or pure
    singular function.
  \end{enumerate}
\end{theorem}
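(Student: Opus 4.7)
My plan is the following. For part~(\ref{item:f.pdf.1}), the nonnegativity $u_n \ge 0$ combined with Corollary~\ref{cor:f.change} gives $\mu_f(\Delta^E_{c_1\ldots c_m}) = \prod_{i=1}^m u_{c_i} \ge 0$, so $f$ is nondecreasing on every cylinder, and therefore on $[0,1]$ via continuity (Theorem~\ref{thm:f.continuous}) together with $f(0)=0$ and $f(1)=1$. Thus $f$ is a continuous cumulative distribution function. To identify it with the law of $\xi = \Delta^E_{\eta_1 \eta_2 \ldots}$, independence of $(\eta_k)$ yields
\[
  \Prob\bigl\{ \xi \in \Delta^E_{c_1 \ldots c_m} \bigr\} = u_{c_1} u_{c_2} \cdots u_{c_m} = \mu_f(\Delta^E_{c_1 \ldots c_m}),
\]
and since the cylinders form a $\pi$-system generating the Borel $\sigma$-algebra of $(0,1]$, Dynkin's theorem upgrades the agreement to all Borel sets. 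Part~(\ref{item:f.pdf.2}) is then immediate: if additionally $u_n > 0$ for every $n$, Corollary~\ref{cor:f.const} rules out constancy intervals, and a nondecreasing function without constancy intervals is strictly increasing.

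For part~(\ref{item:f.pdf.3}), continuity of $f$ excludes atoms, so $\mu_f = \alpha \mu_{ac} + (1-\alpha)\mu_s$ with $\alpha \in [0,1]$, and the whole task is to prove $\alpha \in \{0,1\}$. My plan is a Hewitt--Savage 0-1 argument on the exchangeable i.i.d.\ digit sequence $(\eta_k)$. Form the cylinder-density ratio
\[
  \rho_m(x) = \frac{\mu_f(\Delta^E_{g_1(x) \ldots g_m(x)})}{\lvert \Delta^E_{g_1(x) \ldots g_m(x)} \rvert}
    = (1+\sigma_m) \prod_{i=1}^m u_{g_i(x)} (2+\sigma_i),
\]
where $\sigma_i = g_1(x) + \ldots + g_i(x)$. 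Since the $E$-cylinders are intervals shrinking to $x$, the one-dimensional Lebesgue--Besicovitch differentiation theorem for Radon measures yields $\rho_\infty(x) := \lim_m \rho_m(x)$ for $\mu_f$-a.e.\ $x$, with $\rho_\infty$ finite on a set carrying all of $\mu_{ac}$ and $\rho_\infty = \infty$ on a set carrying all of $\mu_s$. Consequently $\Prob\{\rho_\infty(\xi) = \infty\} = 1-\alpha$.

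The core of the argument is that $\{\rho_\infty(\xi) = \infty\}$ is invariant under every finite permutation of $(\eta_k)$: a permutation of the initial $N$ digits preserves both $\prod_{i=1}^m u_{\eta_i}$ and $\sigma_m$ for $m \ge N$, and alters only the finite factor $\prod_{i=1}^{N-1}(2+\sigma_i)$ into another strictly positive constant. Hence $\rho_m$ is merely rescaled by a bounded positive factor for every $m \ge N$, which cannot turn a finite limit into an infinite one. The Hewitt--Savage 0-1 law then forces $\Prob\{\rho_\infty(\xi) = \infty\} \in \{0,1\}$, so $\alpha \in \{0,1\}$, and $f$ is pure absolutely continuous or pure singular. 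The principal subtlety is identifying $\lim \rho_m(\xi)$ with the Radon-Nikodym derivative along this specific cylinder basis, which reduces to the classical 1D differentiation of Radon measures over intervals containing $x$ and shrinking to it.
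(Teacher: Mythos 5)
Your proposal is correct in substance but takes a genuinely different route from the paper's, most visibly in part~(3). For parts (1)--(2) the paper works directly with the series: it proves monotonicity by estimating $f(x_2)-f(x_1)$ through the first differing digit, and identifies the law of $\xi$ by decomposing $\{\xi<x\}$ into the disjoint events $\{\eta_i=g_i(x),\ i<k,\ \eta_k>g_k(x)\}$ and summing, which reproduces the defining series of $f$. You instead match the two measures on cylinders, $\mu_f(\Delta^E_{c_1\ldots c_m})=u_{c_1}\cdots u_{c_m}=\Prob\{\xi\in\Delta^E_{c_1\ldots c_m}\}$, and invoke uniqueness on a generating $\pi$-system; this is clean, but note that ``nonnegative increment on every cylinder'' gives monotonicity of $f$ only after chaining adjacent sibling cylinders through all ranks and using density of the $E$-rational endpoints together with continuity (or Theorem~\ref{thm:f.maxmin}), a small step worth writing out. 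For part~(3) the paper never differentiates: from a Lebesgue-null set $E$ with $\Prob\{\xi\in E\}>0$ it builds $T(E)$ by prepending all finite digit strings, observes that $\{\xi\in T(E)\}$ is a tail event (Kolmogorov's $0$--$1$ law) and that the prepending maps preserve null sets, so the law is purely singular; in the opposite case the distribution function has the $N$-property and is absolutely continuous. Your route --- the cylinder density ratio $\rho_m$, identification of the singular part with $\{\rho_\infty=\infty\}$, and the Hewitt--Savage $0$--$1$ law for that exchangeable event (your scaling computation under permutations of the first $N$ digits is right) --- is a valid alternative, and it buys a concrete quantitative object: $\rho_m$ is essentially the likelihood-ratio martingale, which also encodes the derivative of $f$.

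The one step to justify with care is the differentiation claim: $E$-cylinders are neither centered at $x$ nor a regular basis, so the Besicovitch theorem for centered balls does not apply verbatim. In dimension one the statement does hold for arbitrary intervals containing $x$ (via the overlap-two covering lemma for intervals), but the cleanest fix is to use that the rank-$m$ cylinders form nested countable partitions generating the Borel $\sigma$-algebra, so $\rho_m$ is the Radon--Nikodym martingale of $\mu_f$ with respect to Lebesgue measure along this filtration; the martingale form of the Lebesgue decomposition then yields exactly ``$\rho_m\to d\mu_{a.c.}/d\lambda$ $\lambda$-a.e.\ and $\rho_m\to\infty$ $\mu_s$-a.e.'' with no geometric regularity needed. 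With that substitution (or with the one-dimensional covering argument spelled out), your proof is complete.
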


\begin{proof}
  \ref{item:f.pdf.1}.~First of all let us prove that under conditions
  of the theorem function $f$ is non-decreasing.

  Let $x_1 < x_2$.  Then there exists $m \in \N$ such that
  \[
    g_m(x_1) > g_m(x_2)
    \quad \text{but} \quad
    g_i(x_1) = g_i(x_2) \quad \text{for} \quad i < m.
  \]
  Consider the difference
  \begin{multline*}
    f(x_2) - f(x_1)
    = \biggl( \prod_{i=1}^{m-1} u_{g_i(x_1)} \biggr)
      \biggl( r_{g_m(x_2)} - r_{g_m(x_1)} +{} \\
        {}+ \sum_{k=m+1}^\infty \biggl(
          r_{g_k(x_2)} \prod_{i=m}^{k-1} u_{g_i(x_2)} \biggr)
        - \sum_{k=m+1}^\infty \biggl(
          r_{g_k(x_1)} \prod_{i=m}^{k-1} u_{g_i(x_1)} \biggr)
      \biggr).
  \end{multline*}

  If $D = \prod\limits_{i=1}^{m-1} u_{g_i(x_1)} = 0$, then $f(x_2) -
  f(x_1) = 0$.  Let $D \neq 0$.  Then from condition $u_i \geq 0$ it
  follows that
  \[
    r \equiv r_{g_m(x_2)} - r_{g_m(x_1)}
    = u_{g_m(x_2)} + \ldots + u_{g_m(x_1)} \geq 0.
  \]

  If $r = 0$, i.e., $u_{g_m(x_2)+1} = \ldots = u_{g_m(x_1)} = 0$, then
  $\prod\limits_{i=m}^{m+j} u_{g_i(x_1)} = 0$ for all $j \in \N$. So,
  \[
    f(x_2) - f(x_1) = D \cdot \sum_{k=m+1}^\infty
      r_{g_k(x_2)} \prod_{i=m}^{k-1} u_{g_i(x_2)} \geq 0.
  \]

  If $r > 0$, then $r \geq u_{g_m(x_2)+1}$.  Hence,
  \begin{align*}
    f(x_2) - f(x_1) &\geq D \biggl( r_{g_m(x_2)} - r_{g_m(x_1)}
      - \sum_{k=m+1}^\infty r_{g_k(x_1)} \prod_{i=m}^{k-1} u_{g_i(x_1)}
      \biggr) \geq \\
    &\geq D (r_{g_m(x_2)} - r_{g_m(x_1)-1}).
  \end{align*}

  The last inequality follows from Corollary~\ref{cor:f.range} of
  Theorem~\ref{thm:f.range}.

  By $u_n \geq 0$ for all $n \in \N$ and $g_m(x_2) < g_m(x_1)$, we have
  \[
    r_{g_m(x_2)} - r_{g_m(x_1)-1} \geq 0.
  \]

  Therefore, $f(x_2) - f(x_1) \geq 0$ always.  Thus $f$ is
  non-decreasing continuous function taking value $0$ at $x = 0$ and
  value $1$ at $x = 1$, i.e., it is a probability distribution
  function on $[0, 1]$.

  Now let us show that expression for distribution function $F_\xi$ of
  random variable $\xi$ coincides with expression for function $f$.

  Find expression for distribution function $F_\xi = \Prob\{ \xi < x
  \}$.  An event $\{ \xi < x \}$ has the form
  \begin{multline*}
    \{ \xi < x \} = \{ \eta_1 > g_1(x) \}
      \cup \{ \eta_1 = g_1(x), \eta_2 > g_2(x) \} \cup \ldots \cup \\
      \cup \{ \eta_i = g_i(x), i = \overline{1,k-1}, \eta_k > g_k(x) \}
      \cup \ldots
  \end{multline*}
  From disjointness of events in the last union it follows that
  \[
    \Prob\{ \xi < x \} = \sum_{k=1}^\infty
      \Prob\{ \eta_i = g_i(x), i = \overline{1,k-1}, \eta_k > g_k(x) \}.
  \]
  Since symbols $\eta_k$ of $E$-representation of random variable
  $\xi$ are independent random variables, we have
  \begin{multline*}
    \Prob\{ \eta_i = g_i(x), i = \overline{1,k-1}, \eta_k > g_k(x) \} = \\
    = \sum_{j=g_k(x)+1}^\infty \Prob\{ \eta_k = j \}
      \prod_{i=1}^{k-1} \Prob\{ \eta_i = g_i(x) \}
    = r_{g_k(x)} \prod_{i=1}^{k-1} u_{g_i(x)}
  \end{multline*}
  and
  \[
    \Prob\{ \xi < x \} = r_{g_1(x)} + \sum_{k=2}^\infty
      \biggl( r_{g_k(x)} \prod_{i=1}^{k-1} u_{g_i(x)} \biggr).
  \]

  \ref{item:f.pdf.2}.~If $u_n > 0$ for all $n \in \N$, then $D > 0$
  and
  \[
    \sum_{k=m+1}^\infty r_{g_k(x_2)} \prod_{i=m}^{k-1} u_{g_i(x_2)} > 0.
  \]
  Hence,
  \[
    f(x_2) - f(x_1) > D (r_{g_m(x_2)} - r_{g_m(x_1)-1}) \geq 0,
  \]
  and thus $f$ is strictly increasing.

  \ref{item:f.pdf.3}.~Thus $f$ is a distribution function of random
  variable $\xi = \Delta^{E}_{\eta_1\eta_2\ldots\eta_k\ldots}$ with
  independent identically distributed $E$-symbols $\eta_k$.  By
  Theorem~\ref{thm:f.continuous}, $f$ is a continuous function.  So it
  is enough to prove that it cannot be a mixture of singular and
  absolutely continuous distributions.

  Let $x = \Delta^E_{g_1(x)g_2(x)\dots g_n(x)\dots}$ and let $t_1$,
  $t_2$, \ldots, $t_n$ be a fixed tuple of non-negative integers.
  Denote
  \[
    \bar\Delta_{t_1 t_2 \dots t_n}(x)
    = \Delta^E_{t_1t_2\dots t_ng_{n+1}(x)g_{n+2}(x)\dots}
  \]
  and for any subset $E$ of closed interval $[0, 1]$
  \begin{gather*}
    \bar\Delta_{t_1 t_2 \dots t_n}(E)
    = \{ u \colon u = \bar\Delta_{t_1 t_2 \dots t_n}(x), x \in E \}, \\
    T_n(E) = \bigcup_{t_1,t_2\dots,t_n}
      \bar\Delta_{t_1 t_2 \dots t_n}(E), \quad
    T(E) = \bigcup_n T_n(E).
  \end{gather*}

  Consider event $A = \{ \xi \in T(E) \}$.  Since $\eta_k$ are
  independent, we see that event $A$ generated by the sequence of
  random variables $\eta_k$ does not depend on all $\sigma$-algebras
  $\mathfrak{B}_m$ generated by $\eta_1$, \dots, $\eta_m$.  So $A$ is
  a residual event.  Thus, by Kolmogorov's $0$ and $1$ law, we have
  $\Prob(A) = 0$ or $\Prob(A) = 1$.

  Since $T(E) \supset E$, we see that from inequality $\Prob \{ \xi
  \in E \} > 0$ it follows that
  \[
    \Prob \{ \xi \in T(E) \} \geq \Prob \{ \xi \in E \} > 0.
  \]
  Thus $\Prob \{ \xi \in T(E) \} = 1$.

  Let us consider two cases:
  \begin{enumerate}
  \item There exists $E$ such that $\lambda(E) = 0$ and $\Prob \{ \xi
    \in E \} > 0$.

  \item For any set $E$ such that $\lambda(E) = 0$, we have $\Prob \{
    \xi \in E \} = 0$.
  \end{enumerate}

  In the first case from equality $\lambda(E) = 0$ it follows that
  $\lambda(T(E)) = 0$.  This means that there exists set $T(E)$ such
  that $\lambda(T(E)) = 0$ and $\Prob \{ \xi \in T(E) \} = 1$, i.e.,
  distribution of $\xi$ is pure singular by definition.

  In the second case distribution function of random variable $\xi$
  has an $N$-property.  This is equivalent to its absolute
  continuity~\cite{Natanson:1974:TFV:en}.
\end{proof}

\section{Conditions of nowhere monotonicity}

\begin{theorem}
  If sequence $(u_n)$ does not contain zeroes but contains negative
  terms, then function $f$ is nowhere monotonic on closed interval
  $[0, 1]$, i.e., it does not have any arbitrary small monotonicity
  interval.
\end{theorem}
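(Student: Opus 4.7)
The plan is to show that any nondegenerate subinterval $I\subset[0,1]$ contains two pairs of points witnessing, respectively, a strict increase and a strict decrease of $f$, which forces $f$ to be neither non-decreasing nor non-increasing on $I$. The key tool is Corollary~\ref{cor:f.change}: the change of $f$ across a cylinder $\Delta^E_{c_1\ldots c_m}$ equals $D_m=\prod_{i=1}^m u_{c_i}$, so the sign of this change can be flipped at will by descending one level and multiplying $D_m$ by an appropriately chosen $u_{c_{m+1}}$.

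First, using the cylinder property that $\lvert\Delta^E_{c_1\ldots c_m}\rvert\to 0$ as $m\to\infty$, I would select some cylinder $\Delta^E_{c_1\ldots c_m}$ whose closure lies inside $I$. Since no $u_n$ vanishes, $D_m\neq 0$; the initial conditions force $u_0=1-r_0>0$, so there is an index $p$ with $u_p>0$, and by hypothesis there is also an index $q$ with $u_q<0$. I would then apply Corollary~\ref{cor:f.change} and Theorem~\ref{thm:f.maxmin} to the two sub-cylinders $\Delta^E_{c_1\ldots c_m p}$ and $\Delta^E_{c_1\ldots c_m q}$: their changes $D_m u_p$ and $D_m u_q$ have opposite signs, and on each sub-cylinder the extrema of $f$ are attained at its endpoints, with the sign of the change dictating which endpoint is the maximum. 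Reading off endpoints of the two sub-cylinders produces pairs $x_1<x_2$ with $f(x_1)<f(x_2)$ and $x_3<x_4$ with $f(x_3)>f(x_4)$, all lying in $I$.

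I do not expect any real obstacle in this argument. The only minor bookkeeping issue is that cylinders are left-open, so the smaller endpoint $a_{m+1}$ of a sub-cylinder formally does not belong to that sub-cylinder; this is harmless because the whole closure $[a_m,b_m]$ of the initially chosen cylinder already sits inside $I$, and in any case a small perturbation using the continuity of $f$ (Theorem~\ref{thm:f.continuous}) places any endpoint strictly inside $I$ if needed. Since $I$ was an arbitrary subinterval of $[0,1]$, $f$ admits no monotonicity interval, which is the claim.
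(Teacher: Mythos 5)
Your proposal is correct and follows essentially the paper's own argument: both locate a cylinder inside the given interval and use Corollary~\ref{cor:f.change} together with Theorem~\ref{thm:f.maxmin} to exhibit subcylinders whose changes $\prod u_{c_i}$ have opposite signs, hence endpoint pairs on which $f$ strictly increases and strictly decreases. The only (immaterial) difference is that you compare two sibling subcylinders indexed by $p$ with $u_p>0$ and $q$ with $u_q<0$, whereas the paper compares the cylinder itself with one subcylinder $\Delta^E_{c_1\ldots c_m c}$ having $u_c<0$.
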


\begin{proof}
  By Corollary~\ref{cor:f.const} from Theorem~\ref{thm:f.maxmin},
  function $f$ does not have constancy intervals.

  To prove that it does not have any monotonicity interval, it is
  enough to show that $f$ is not monotonic on any cylinder.  To this
  end, for any cylinder
  \[
    \Delta^E_{c_1c_2\ldots c_m} = (x_0, x_3],
  \]
  it is enough to give two points $x_1$ and $x_2$, where $x_1 < x_2$,
  such that values
  \[
    f(x_0), \; f(x_1), \; f(x_2), \; f(x_3)
  \]
  does not form a monotonic tuple of numbers.  Remark that it is
  enough even three points
  \[
    x_0, \; x_1, \; x_2 \quad \text{or} \quad x_1, \; x_2, \; x_3.
  \]
  Moreover, taking into account Theorem~\ref{thm:f.maxmin}, to prove
  monotonicity of $f$ on $\Delta^E_{c_1c_2\ldots c_m}$, it is enough
  to give cylinder of rank $m+1$ belonging to it such that change in
  function $f$ on this cylinder and change in function $f$ on
  $\Delta^E_{c_1c_2\ldots c_m}$ have different signs.

  It is evident that $\Delta^E_{c_1c_2\ldots c_mc}$, where $u_c < 0$,
  is such cylinder because change in function $f$ on
  $\Delta^E_{c_1c_2\ldots c_m}$ is equal to
  \[
    D_m = \prod_{i=1}^m u_{c_i},
  \]
  and on $\Delta^E_{c_1c_2\ldots c_mc}$ is equal to $D_m \cdot u_c$.
  They have different signs.

  If $D_m > 0$, then change in function $f$ is positive on cylinder
  $\Delta^E_{c_1c_2\ldots c_m}$ and negative on cylinder
  $\Delta^E_{c_1c_2\ldots c_mc}$.  If $D_m < 0$, then we have the
  opposite situation.
\end{proof}

\section{Extrema of the function}

Lemma~\ref{lem:f.const.cylinder} and Theorem~\ref{thm:f.maxmin} give
an exhaustive answer on the question on maximal and minimal value of
the function on cylinder.  It is enough to study function in the
endpoints of cylinders, that is in $E$-rational points (points having
the representation $\Delta^E_{c_1c_2\ldots c_mi(0)}$).

\begin{theorem}
  \label{thm:f.extremum}
  \textup1.~If $u_{i-1} u_i < 0$ for some $i$, then any point
  \[
    \Delta^E_{c_1c_2\ldots c_mi(0)},
    \quad \text{where} \quad
    D_m = \prod_{i=1}^m u_{c_1} \neq 0,
  \]
  is an extreme point of the function $f$.  Moreover, it is a maximum
  point if $D_m u_i > 0$, and minimum point if $D_m u_i < 0$.

  \textup2.~If $u_{i-1} u_i \geq 0$, any point
  \[
    \Delta^E_{c_1c_2\ldots c_mi(0)}
  \]
  is not an extreme point of the function $f$.
\end{theorem}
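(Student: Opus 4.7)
The plan is to exploit the local two-cylinder structure around the $E$-rational point $x_0 = \Delta^E_{c_1c_2\ldots c_mi(0)}$. First, using Remark~\ref{rem:E.rational.cylinders} together with property~(2) of cylinders, I would identify $x_0$ as the common point of the two adjacent rank-$(m+1)$ cylinders inside $\Delta^E_{c_1\ldots c_m}$: namely $\Delta^E_{c_1\ldots c_m i}$, for which $x_0$ is the included right endpoint, and $\Delta^E_{c_1\ldots c_m(i-1)}$, for which $x_0$ is the (non-included) infimum. Continuity of $f$ at $x_0$ (Theorem~\ref{thm:f.continuous}) ensures that $f(x_0)$ equals the right-hand limit, so $f$-values on the right cylinder can legitimately be compared with $f(x_0)$.

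Next I would apply Theorem~\ref{thm:f.maxmin} (equivalently Corollary~\ref{cor:f.change}) to each of these two cylinders. On $\Delta^E_{c_1\ldots c_m i}$ the total change is $D_m u_i$, so $x_0$ is the maximum of $f$ on this cylinder when $D_m u_i > 0$ (yielding $f(x) \leq f(x_0)$) and the minimum when $D_m u_i < 0$ (yielding $f(x) \geq f(x_0)$). On $\Delta^E_{c_1\ldots c_m(i-1)}$ the change is $D_m u_{i-1}$ and $x_0$ sits at the infimum side, so $f(x) \geq f(x_0)$ on this cylinder when $D_m u_{i-1} > 0$ and $f(x) \leq f(x_0)$ when $D_m u_{i-1} < 0$. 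In the degenerate situations $u_i = 0$ or $u_{i-1} = 0$, Lemma~\ref{lem:f.const.cylinder} shows that $f$ is constant on the corresponding cylinder; similarly, if $D_m = 0$ then some lower-rank cylinder containing $x_0$ is already a constancy interval.

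For Part~1, the hypothesis $u_{i-1} u_i < 0$ together with $D_m \neq 0$ forces $D_m u_{i-1}$ and $D_m u_i$ to have opposite signs. When $D_m u_i > 0$ (so $D_m u_{i-1} < 0$), the two observations combine to give $f(x) \leq f(x_0)$ on both cylinders, so $x_0$ is a local maximum; the symmetric case $D_m u_i < 0$ yields a local minimum, matching exactly the claimed dichotomy. For Part~2 with $u_{i-1} u_i \geq 0$: when both factors are nonzero of the same sign, $D_m u_{i-1}$ and $D_m u_i$ share a sign, which forces $f$ to lie on strictly opposite sides of $f(x_0)$ on the two cylinders, so $x_0$ is not extremal; when one of $u_i, u_{i-1}$ (or $D_m$) vanishes, $f$ becomes locally constant on one side of $x_0$, so arbitrarily close to $x_0$ there are points with $f(x) = f(x_0)$, precluding a strict extremum.

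The chief obstacle I anticipate is the orientation bookkeeping: keeping consistent track of which role $x_0$ plays for each cylinder (included right endpoint on one side versus non-included infimum on the other) and correctly translating the sign of the product $D_m u_j$ into the direction of the inequality between $f$ and $f(x_0)$. Once those two reciprocal roles are pinned down, the entire theorem reduces to a short case check over the four sign combinations of $(D_m u_{i-1}, D_m u_i)$, supplemented by Lemma~\ref{lem:f.const.cylinder} to dispose of the degenerate vanishing cases.
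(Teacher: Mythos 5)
Your proposal is correct and follows essentially the same route as the paper: identify $\Delta^E_{c_1\ldots c_mi(0)}$ as the common endpoint of the adjacent rank-$(m+1)$ cylinders $\Delta^E_{c_1\ldots c_mi}$ and $\Delta^E_{c_1\ldots c_m[i-1]}$, compare the signs of the changes $D_m u_i$ and $D_m u_{i-1}$ via Theorem~\ref{thm:f.maxmin} and Corollary~\ref{cor:f.change}, and dispose of the vanishing cases with Lemma~\ref{lem:f.const.cylinder}. Your extra care about endpoint inclusion and continuity only makes explicit what the paper leaves implicit.
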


\begin{proof}
  1.~Let $D_m > 0$.  Then from Corollary of Theorem~\ref{thm:f.maxmin}
  follows that change in function $f$ on cylinder
  $\Delta^E_{c_1c_2\ldots c_m}$ is positive.

  If $u_i > 0$, then from the same corollary change in function is
  positive on cylinder $\Delta^E_{c_1c_2\ldots c_mi}$ and negative on
  cylinder $\Delta^E_{c_1c_2\ldots c_m[i-1]}$ lying to the right.  So
  common endpoint of these cylinders, that is point $x_i \equiv
  \Delta^E_{c_1c_2\ldots c_mi(0)}$, is a maximum point.

  If $u_i < 0$, then change in function $f$ is negative on cylinder
  $\Delta^E_{c_1c_2\ldots c_mi}$ and positive on cylinder
  $\Delta^E_{c_1c_2\ldots c_m[i-1]}$.  Thus point $x_i$ is a minimum
  point.

  If $D_m < 0$, then we obtain the result analogously.

  2.~If $u_{i-1} u_i = 0$, then using Lemma~\ref{lem:f.const.cylinder}
  we have that function is constant at least on one of cylinders
  \begin{equation}
    \label{eq:two.cylinders}
    \Delta^E_{c_1c_2\ldots c_mi}, \quad \Delta^E_{c_1c_2\ldots c_m[i-1]}.
  \end{equation}
  Thus, their common endpoint $\Delta^E_{c_1c_2\ldots c_mi(0)}$ is not
  extreme point.

  If $u_{i-1} u_i > 0$, then change in function $f$ has the same sign
  on both cylinders~\eqref{eq:two.cylinders}.  Thus, their common
  endpoint is not extreme point.
\end{proof}

\begin{corollary}
  If sequence $(u_n)$ has negative terms, then function $f$ has a
  countable set of extreme points, and they are $E$-rational numbers.
\end{corollary}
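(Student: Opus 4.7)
The plan is to combine Theorem~\ref{thm:f.extremum}.1, which produces a countable $E$-rational family of extrema, with a separate argument ruling out non-$E$-rational points as extrema. Countability of the whole extreme set then follows automatically, since $E$-rational numbers are parameterized by finite tuples of non-negative integers.

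For the existence of $E$-rational extrema I would locate an index $i \geq 1$ with $u_{i-1}u_i < 0$; such an $i$ is provided by the hypothesis that $(u_n)$ contains a negative term while $\sum u_n = 1 > 0$ forces positive terms as well, with a sign change between nonzero neighbors (assuming the generic situation; otherwise Theorem~\ref{thm:f.extremum}.2 yields no $E$-rational extrema and the statement holds vacuously on this side). Theorem~\ref{thm:f.extremum}.1 then declares every point $\Delta^E_{c_1\ldots c_m i(0)}$ with $(c_1,\ldots,c_m) \in \Z_0^m$ and $\prod_{j=1}^{m} u_{c_j} \neq 0$ to be an extremum of $f$; this family is manifestly $E$-rational and countable.

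The crux is excluding non-$E$-rational extrema. Let $x_0 \in (0,1)$ be non-$E$-rational and put $C_m = \Delta^E_{g_1(x_0)\ldots g_m(x_0)} = (a_m, b_m]$; since $b_m$ is $E$-rational, $x_0 \in (a_m, b_m)$ for every $m$, and $b_m - a_m \to 0$. If $D_m \equiv \prod_{j=1}^{m} u_{g_j(x_0)} = 0$ for some $m$, then by the Corollary to Lemma~\ref{lem:f.const.cylinder} $f$ is constant on $C_m$, a neighborhood of $x_0$, so $x_0$ is not a strict local extremum. Otherwise $D_m \neq 0$ for every $m$; iterating the functional equation produces $f(x_0) = y_m + D_m f(\omega^m(x_0))$, and because $\omega^m(x_0)$ is itself non-$E$-rational (in particular different from $0$ and $1$), we have $f(\omega^m(x_0)) \in (0,1)$ strictly, placing $f(x_0)$ strictly between the endpoint values $y_m$ and $y_m + D_m$. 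By Theorem~\ref{thm:f.maxmin} these two numbers are the extremes of $f$ on the closure of $C_m$, and continuity of $f$ at $a_m$ (where $f$ assumes the extreme value not realized inside $C_m$ itself) guarantees that every neighborhood of $x_0$ contains points of $C_m$ with $f$-values both strictly above and strictly below $f(x_0)$; hence $x_0$ is neither a local maximum nor a local minimum. The main technical hurdle is precisely this strict interior positioning of $f(x_0)$ in the cylinder's $f$-range, which rests simultaneously on the shift-invariance of non-$E$-rationality under $\omega$ and on the fact — extractable from Theorem~\ref{thm:f.range} and direct inspection of the series — that $f$ attains the extreme values $0$ and $1$ only at $0$ and $1$ respectively.
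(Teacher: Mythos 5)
Your proposal is correct and takes essentially the paper's route: the paper treats the corollary as immediate from Theorem~\ref{thm:f.extremum} together with the (implicitly justified) observation, resting on Lemma~\ref{lem:f.const.cylinder} and Theorem~\ref{thm:f.maxmin}, that extrema can occur only at endpoints of cylinders, i.e., at $E$-rational points, which form a countable set. Your explicit exclusion of non-$E$-rational points --- via $f(x_0)=y_m+D_m f(\omega^m(x_0))$ and the fact that $f$ attains the value $1$ only at $x=1$ (a short induction with the functional equation, using $u_0>0$ and $r_i+u_i\le 1$ with equality only for $i=0$) --- merely fills in the step the paper leaves implicit, so the two arguments coincide in substance.
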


\begin{corollary}
  If sequence $(u_n)$ does not contain zeroes, but contains negative
  terms, then the set of extreme points of function $f$ is everywhere
  dense.
\end{corollary}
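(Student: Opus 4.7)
The plan is to show that every non-empty open subinterval of $[0, 1]$ contains an extreme point produced by Theorem~\ref{thm:f.extremum}(1). Given any open interval $(a, b) \subset [0, 1]$, I will locate a cylinder inside it, then place an explicit $E$-rational extreme point within that cylinder.

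The first step uses property~(3) of cylinders, which says that the length of $\Delta^E_{c_1 c_2 \ldots c_m}$ tends to zero as $m \to \infty$. Picking any $x_0 \in (a, b) \cap (0, 1)$ and setting $c_j = g_j(x_0)$, the cylinder $\Delta^E_{c_1 \ldots c_m}$ containing $x_0$ lies entirely in $(a, b)$ for all sufficiently large $m$.

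The second step is to identify a digit $i$ with $u_{i-1} u_i < 0$. Condition~\eqref{eq:un.init.conditions.rn.un} gives $0 < r_0 = 1 - u_0 < 1$, hence $u_0 > 0$, while the hypothesis supplies at least one negative term. Letting $i$ be the smallest index with $u_i < 0$ yields $i \geq 1$, and the no-zero hypothesis forces $u_{i-1} > 0$, producing the required sign change.

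The third step is to exhibit the point. Choose any $c \in \Z_0$ and consider $x^* = \Delta^E_{c_1 c_2 \ldots c_m c i (0)}$. Then $x^* \in \Delta^E_{c_1 \ldots c_m} \subset (a, b)$. The product $D_{m+1} = u_{c_1} \cdots u_{c_m} u_c$ is nonzero because $(u_n)$ has no zero entries, and $u_{i-1} u_i < 0$ by construction, so Theorem~\ref{thm:f.extremum}(1) applies and certifies $x^*$ as an extreme point of $f$ inside $(a, b)$. The only mildly delicate ingredient is producing the adjacent pair of opposite signs, where both the built-in positivity of $u_0$ and the absence of zeros are essential; everything else reduces to bookkeeping with cylinders and a direct invocation of the extremum theorem.
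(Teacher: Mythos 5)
Your argument is correct and follows essentially the paper's own route: the paper disposes of this corollary with the remark that extreme points exist in every cylinder, which is exactly what you verify by shrinking a cylinder into the given interval, extracting an index $i$ with $u_{i-1}u_i<0$ (using $u_0>0$ from~\eqref{eq:un.init.conditions.rn.un} and the no-zero hypothesis), and invoking Theorem~\ref{thm:f.extremum}(1) at the $E$-rational point $\Delta^E_{c_1\ldots c_m c i(0)}$.
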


In fact, in this case extreme points exist in every cylinder.

\begin{remark}
  From Remark~\ref{rem:E.rational.cylinders} and
  Theorem~\ref{thm:f.extremum} follows that extreme points form empty
  set if $u_n \geq 0$ for all $n \in \N$ or countable subset of the
  set of $E$-rational points if at least one $u_i < 0$ exists and
  coincides with it if sequence $(u_n)$ is alternating.  Moreover,
  Theorem~\ref{thm:f.maxmin} describes the values of extrema.
\end{remark}

\section{Level sets of the function}

Let us recall that level set $y_0$ of function $f$ is a set
\[
  f^{-1}(y_0) = \{ x \colon f(x) = y_0 \}.
\]

If $u_n > 0$ for all $n \in \N$, then $f$ is a continuous strictly
increasing function as proved above.  Thus, any its level consists of
one point.

If $u_n \geq 0$ for all $n \in \N$, but there are exist $u_p = 0$,
then level
\[
  y = r_{c_1} + \sum_{k=2}^m r_{c_k} \prod_{i=1}^{k-1} u_{c_i}
    + r_p \prod_{i=1}^m u_{c_i}
\]
contains cylindrical closed interval $\Delta^E_{c_1c_2\ldots c_mp}$.
In this case any level is either point or closed interval (due to
continuity).

Situation is more complex if there are negative terms in the sequence
$u_n$.  Results of the previous section suggest that function $f$ does
not have a continuum level set, so there are not exist levels of the
function having fractal properties.  Moreover, the properties of the
level set essentially depend on the sequence $(u_n)$.

\begin{theorem}
  If there exist negative terms in the sequence $(u_n)$ and
  $E$-representation of number $x = \Delta^E_{g_1(x)g_2(x)\ldots
    g_n(x)\ldots}$ has the following property:
  \begin{equation}
    \label{eq:ugi.prod}
    u_{g_i(x)} u_{g_{i+1}(x)} < 0
  \end{equation}
  for infinite set of values $i \in \N$, then level $f^{-1}(y_0)$,
  where $y_0 = f(x)$, is a countable set.
\end{theorem}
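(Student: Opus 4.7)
My plan is to show that every $x_2 \in f^{-1}(y_0)$ agrees with $x$ in all $E$-digits beyond some finite position, with each earlier digit taking values from a finite set; this embeds the level set into a countable collection.

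First, I set up a reduction to the first disagreement. For $x_2 \in f^{-1}(y_0) \setminus \{x\}$ with $m = \min\{j : g_j(x_2) \neq g_j(x)\}$, iterating \eqref{eq:f.func.eq} yields
\[
  f(x) - f(x_2) = \biggl(\prod_{i=1}^{m-1} u_{g_i(x)}\biggr) \bigl[f(\omega^{m-1}(x)) - f(\omega^{m-1}(x_2))\bigr].
\]
Assuming $u_{g_j(x)} \neq 0$ for all $j$ (else Lemma~\ref{lem:f.const.cylinder} puts a whole cylinder inside $f^{-1}(y_0)$, contradicting countability), the product on the right is nonzero, so $f(\omega^{m-1}(x_2)) = f(\omega^{m-1}(x)) =: y_m^* > 0$. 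By Corollary~\ref{cor:f.range}, $g_m(x_2)$ must lie in the finite set $\{j \in \Z_0 : y_m^* \in [\min(r_j, r_{j-1}), \max(r_j, r_{j-1})]\}$, finite because $r_n \to 0$. So at each disagreement only finitely many branches are admissible.

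The heart of the plan uses \eqref{eq:ugi.prod} to rule out infinite disagreement. Suppose for contradiction some $x_2 \in f^{-1}(y_0)$ differs from $x$ at infinitely many positions. Fix an index $i \in I$ sufficiently large with $g_j(x_2) = g_j(x)$ for $j \leq i$ and disagreement recurring past $i$. The reduction at position $i+1$ gives $f(\omega^i(x_2)) = f(\omega^i(x)) =: y^*$. By Theorem~\ref{thm:f.maxmin}, the extremal values $r_{g_{i+1}(x)}$ and $r_{g_{i+1}(x)-1}$ of $f$ on the cylinder $\Delta^E_{g_{i+1}(x)}$ are each attained only at the cylinder's unique $E$-rational endpoint; since $\omega^i(x)$ is not such an endpoint (the alternation \eqref{eq:ugi.prod} persists past $i$), $y^*$ lies strictly interior to $[\min(r_{g_{i+1}(x)}, r_{g_{i+1}(x)-1}), \max(r_{g_{i+1}(x)}, r_{g_{i+1}(x)-1})]$. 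The condition $u_{g_i(x)} u_{g_{i+1}(x)} < 0$ then narrows the admissible alternative digits $c' = g_{i+1}(x_2) \neq g_{i+1}(x)$ (each of which must satisfy $y^* \in [\min(r_{c'}, r_{c'-1}), \max(r_{c'}, r_{c'-1})]$) to a finite explicit list via sign-case analysis; propagating to the next alternation index in $I$ via the recurrence $f(\omega^j(x_2)) = (y_{j-1}^* - r_{g_j(x_2)})/u_{g_j(x_2)}$ and iterating reduces this list to the empty set, the required contradiction.

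The principal obstacle is this final propagation argument. The sign-case analysis must handle nontrivial overlaps of the range intervals $[\min(r_j, r_{j-1}), \max(r_j, r_{j-1})]$ when $(u_n)$ has mixed signs, and in the combinatorial layer one must carefully justify that we can always locate $i \in I$ for which the disagreement of $x_2$ with $x$ recurs at some later position — a scenario requiring one to track how alternative branches at one level constrain those at later levels through the recurrence above. Once finite disagreement is established, the counting is routine: finite disagreement sets paired with finite-valued digit assignments form a countable collection, so $f^{-1}(y_0)$ is countable.
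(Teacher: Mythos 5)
Your plan has a genuine gap at exactly the step you call ``the principal obstacle,'' and that step cannot be repaired in the form you describe. The hypothesis \eqref{eq:ugi.prod} is a condition on the digits of the fixed point $x$ only; once $x_2$ first disagrees with $x$ at position $m$, your recurrence $f(\omega^j(x_2))=(y^*_{j-1}-r_{g_j(x_2)})/u_{g_j(x_2)}$ runs along the digits of $x_2$, which are subject to no sign-alternation whatsoever, so the proposed ``sign-case analysis'' has nothing to propagate and no reason is given why the finite list of admissible digits should ever shrink to the empty set. The auxiliary claim feeding it is also unjustified: Theorem~\ref{thm:f.maxmin} says the extreme values on a cylinder are attained at its endpoints, not that they are attained \emph{only} there (the function is nowhere monotone in the interesting cases), so you cannot conclude that $y^*$ is interior to the value interval merely because $\omega^i(x)$ is not an endpoint; and even interiority would not exclude alternative first digits, since the intervals $[\min(r_j,r_{j-1}),\max(r_j,r_{j-1})]$ may overlap. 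Finally, your dismissal of zero terms (``else Lemma~\ref{lem:f.const.cylinder} puts a whole cylinder inside $f^{-1}(y_0)$, contradicting countability'') is circular, because countability is precisely what is to be proved. In short, the assertion that every point of the level set agrees with $x$ in all but finitely many digits is neither proved nor plausible, and without it your counting collapses.

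There is also a mismatch with what the statement actually asserts and how the paper uses the hypothesis. ``Countable'' here means countably infinite, and the alternation condition \eqref{eq:ugi.prod} is employed in the opposite direction from yours: by Corollary~\ref{cor:f.change} the increments of $f$ on the nested cylinders $\Delta^E_{g_1(x)\ldots g_i(x)}$ and $\Delta^E_{g_1(x)\ldots g_i(x)g_{i+1}(x)}$ have opposite signs, so Theorem~\ref{thm:f.maxmin} together with continuity (intermediate value theorem) yields points of the level $y_0$ lying in the rank-$i$ cylinder but outside the rank-$(i+1)$ cylinder, and this recurs for infinitely many $i$; hence the level is infinite, and the paper then rules out continuum cardinality via the countability of the set of local extrema. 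Your proposal never produces a single solution besides $x$, so even if its central gap were closed it would only give an upper bound (``at most countable'') and would miss the part of the theorem for which the hypothesis on $x$ is actually needed.
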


\begin{proof}
  Using Corollary~\ref{cor:f.change} from Theorem~\ref{thm:f.maxmin}
  we have that changes in function $f$ have different signs on
  cylinders
  \[
    \Delta^E_{g_1(x)g_2(x)\ldots g_i(x)}
    \quad \text{and} \quad
    \Delta^E_{g_1(x)g_2(x)\ldots g_i(x)g_{i+1}(x)},
  \]
  and by Theorem~\ref{thm:f.maxmin} values of function $f$ on a
  cylinder form a closed interval such that its endpoints are values
  of function of cylinder's endpoints.  Thus, taking into account a
  continuity of function we have that line $y = y_0$ intersects the
  graph of function $f$ at least at two points belonging to cylinder
  $\Delta^E_{g_1(x)g_2(x)\ldots g_i(x)}$ and not belonging to cylinder
  $\Delta^E_{g_1(x)g_2(x)\ldots g_i(x)g_{i+1}(x)}$.

  The pattern repeats for the next $i$ such that
  condition~\eqref{eq:ugi.prod} holds.  This will repeats infinitely
  many times. So, level $f^{-1}(y_0)$ is an infinite set. It cannot be
  continuum as mentioned above, since the set of local maximums and
  minimums is countable.  Thus, $f^{-1}(y_0)$ is a countable set.
\end{proof}

\begin{remark}
  If condition~\eqref{eq:ugi.prod} holds, the $E$-representation of
  number $x$ does not have a simple period (i.e., period consisting of
  one symbol), and therefore, it is not an $E$-rational number.
\end{remark}

\section{``Symmetries'' of the graph. Scale invariance}

\begin{theorem}
  \label{thm:f.graph.symmetric}
  Graph $\Gamma_f$ of the function $f$ is a scale-invariant set,
  namely:
  \[
    \Gamma_f = \Delta^E_{(0)} \cup \bigcup_{i=0}^\infty \Gamma_i,
    \quad \text{where} \quad
    \Gamma_i = \varphi_i(\Gamma_f),
  \]
  and
  \[
    \varphi_i \colon \left\{
      \begin{aligned}
        x' &= \Delta^E_{ig_1(x)g_2(x)\ldots g_n(x)\ldots} = \delta_i(x), \\
        y' &= r_i + u_i f(x).
      \end{aligned}
    \right.
  \]
\end{theorem}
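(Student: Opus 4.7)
The plan is to prove the identity by double inclusion, using as the only nontrivial tool the functional equation $f(\delta_i(x)) = r_i + u_i f(x)$ from the Corollary to the Lemma containing~\eqref{eq:f.func.eq}. That equation is precisely what says each $\varphi_i$ sends $\Gamma_f$ into itself, so once it is in hand the proof is a matter of bookkeeping.

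For the forward inclusion $\bigcup_{i=0}^\infty \Gamma_i \subseteq \Gamma_f$, a typical point of $\Gamma_i$ has the form $\varphi_i(x, f(x)) = (\delta_i(x),\, r_i + u_i f(x))$ for some $x \in (0, 1]$, and the functional equation rewrites the second coordinate as $f(\delta_i(x))$, putting the image on $\Gamma_f$. The isolated term $\Delta^E_{(0)}$ in the statement corresponds to the point $(\Delta^E_{(0)}, f(\Delta^E_{(0)})) = (1, 1)$, which is directly in $\Gamma_f$; note that this point is also a fixed point of $\varphi_0$, since $\varphi_0(1, 1) = (\Delta^E_{0(0)},\, r_0 + u_0) = (1, 1)$ by the normalization $r_0 + u_0 = 1$, so it sits inside $\Gamma_0$ as well.

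For the reverse inclusion, I would take an arbitrary $(t, f(t)) \in \Gamma_f$ with $t \in (0, 1]$ and set $i = g_1(t)$ and $s = \omega(t)$. Because the $E$-representation of every number in $(0, 1]$ has infinitely many nonzero digits (as observed after~\eqref{eq:un.init.conditions.rn.un}), $\omega(t)$ corresponds to a valid $E$-representation and lies again in $(0, 1]$. Tautologically $t = \delta_i(s)$, and the functional equation gives $f(t) = r_i + u_i f(s)$, so $(t, f(t)) = \varphi_i(s, f(s)) \in \Gamma_i$. The extended point $(0, 0)$ introduced by the convention $f(0) = 0$ is not in the image of any $\varphi_i$ (since $\delta_i(x) \in (0, 1]$), so strictly speaking it should be covered separately or interpreted as lying in the closure, depending on which convention for $\Gamma_f$ is in force.

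The main obstacle is therefore not analytic but notational: one must be careful about the interplay between the extended definition of $f$ at the endpoints, the admissible domain of $\omega$, and the redundancy of the singleton $\Delta^E_{(0)}$ in the union. There is no estimate, no limiting argument, and no fractal-geometric input needed beyond verifying that the functional equations~\eqref{eq:f.func.eqs} match the definition of the maps $\varphi_i$.
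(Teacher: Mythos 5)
Your proposal is correct and takes essentially the same route as the paper: both inclusions follow from the functional equations $f(\delta_i(x)) = r_i + u_i f(x)$, with the shift $\omega$ supplying the preimage in the reverse direction, and your observations that the singleton $\Delta^E_{(0)}$ is redundant (being a fixed point of $\varphi_0$) and that the extended point $(0,0)$ needs a convention are sound. One minor slip: your justification that $\omega(t)\in(0,1]$ is misstated --- $E$-rational numbers and $x=1$ have only finitely many nonzero digits --- but the fact itself is immediate, since every sequence of non-negative integers is an admissible $E$-representation of some number in $(0,1]$.
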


\begin{proof}
  1.~First of all we prove that
  \[
    \Gamma_f \subset \Delta^E_{(0)} \cup \bigcup_{i=0}^\infty \Gamma_i
    \equiv F.
  \]
  Let $\Delta^E_{(0)} \neq M(x,y) \in \Gamma_f$ that is $y = f(x)$.
  We show that $M \in F$ that is there exist $i$ such that $M \in
  \Gamma_i$.

  Let us consider point $M_1 \bigl( \Delta^E_{g_2(x)g_3(x)\ldots
    g_n(x)\ldots}, r_{g_2(x)} + u_{g_2(x)} f(\omega^2(x)) \bigr)$. It
  is evident that $M_1 \in \Gamma_f$. Then
  \[
    \varphi_{g_1(x)}(M_1)
    = M_1' \bigl( \Delta^E_{g_1(x)g_2(x)\ldots g_n(x)\ldots},
      r_{g_1(x)} + u_{g_1(x)}
      \bigl( r_{g_2(x)} + u_{g_2(x)} f(\omega^2(x)) \bigr) \bigr)
    \in \Gamma_{g_1(x)},
  \]
  and $M_1' = M$.  Then $M \in \Gamma_{g_1(x)}$, and thus $M \in F$
  and $\Gamma_f \subset F$.

  2.~Show that $F \subset \Gamma_f$. Let $M'(x',y') \in F$.  If
  $M'(x',y') = \Delta^E_{(0)}$, then there is nothing to prove;
  otherwise there exist $j$ such that $M' \in \Gamma_j$.  Consider
  point $M(x,y)$ such that $M' = \varphi_j(M)$.  Then $M \in \Gamma_f$
  that is $y = f(x)$, and $M'$ has coordinates
  \[
    \bigl( \Delta^E_{ig_1(x)g_2(x)\ldots g_n(x)\ldots},
      r_i + u_i f(\omega(x)) \bigr)
  \]
  that is $y' = f(x')$.

  Thus $M' \in \Gamma_f$ and $F \subset \Gamma_f$.

  Equality $\Gamma_f = F$ follows from inclusions $\Gamma_f \subset F$
  and $F \subset \Gamma_f$.
\end{proof}

\begin{remark}
  Transformation $\delta_i$ of left-open interval $(0, 1]$ defined by
  the formula
  \[
    x' = \delta_i(x) = \Delta^E_{ig_1(x)g_2(x)\ldots g_n(x)\ldots},
  \]
  has non-trivial properties, since
  \[
    x' = \frac{1}{2+i} + \frac{1}{2+i} \, x^*,
    \quad \text{where} \quad
    x^* = \Delta^E_{[i+g_1(x)]g_2(x)g_3(x)\ldots g_n(x)\ldots},
  \]
  and requires special study.
\end{remark}

\section{Integral properties of the function}

\begin{lemma}
  If $i$ is a fixed number belonging to $\Z_0$, then mapping
  \[
    x' = \delta_i(x) = \Delta^E_{ig_1(x)g_2(x)\ldots g_n(x)\ldots},
  \]
  is contractive with coefficient $\frac{1}{2+i}$.
\end{lemma}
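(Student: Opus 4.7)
My approach is to use the identity from the preceding remark, $\delta_i(x) = \frac{1}{2+i}(1 + x^*)$ with $x^* = \Delta^E_{[i + g_1(x)] g_2(x) g_3(x) \ldots}$. Subtracting gives $\delta_i(x) - \delta_i(y) = \frac{1}{2+i}(x^*(x) - x^*(y))$, so contractivity with coefficient $1/(2+i)$ is equivalent to the non-expansiveness of the auxiliary map $x \mapsto x^*$: $|x^*(x) - x^*(y)| \leq |x-y|$ for all $x, y \in (0, 1]$.

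For this I plan to iterate along Engel digits. The key ingredients are the Engel recursion $x = (1 + \theta(x))/(2 + g_1(x))$ (with $\theta$ the piecewise-linear sum-shift noted in the text just before the lemma) together with the parallel identity $\delta_i(x) = \tfrac{1}{2+i} + \tfrac{\delta_i(\theta(x))}{2+i+c}$ valid for $x \in \Delta^E_c$, which one obtains by factoring $1/(2+i)$ out of the Engel series of $\delta_i(x)$ and recognizing the inner sum as $\delta_i(\theta(x))$. For $x, y$ in the same cylinder $\Delta^E_c$, subtracting these two formulas yields $(\delta_i(x)-\delta_i(y))/(x-y) = \tfrac{2+c}{2+i+c} \cdot (\delta_i(\theta(x))-\delta_i(\theta(y)))/(\theta(x)-\theta(y))$, so the Lipschitz ratio is scaled by the factor $(2+c)/(2+i+c) \leq 1$. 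Letting $m$ be the smallest index at which $g_m(x) \neq g_m(y)$ and iterating this $m-1$ times, the iterates $\theta^{m-1}(x), \theta^{m-1}(y)$ land in distinct cylinders, and the accumulated product of the contraction factors is at most $1$.

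The remaining step is the base case $u \in \Delta^E_a, v \in \Delta^E_b$ with $a \neq b$, where one needs $|\delta_i(u) - \delta_i(v)| \leq |u-v|/(2+i)$ directly. Here $\delta_i(u), \delta_i(v)$ both lie in the rank-$1$ cylinder $\Delta^E_i$ of length $1/((2+i)(1+i))$, while $|u-v|$ is controlled from below using the disjointness of $\Delta^E_a, \Delta^E_b$ together with their explicit Engel expansions $u = (1+\theta(u))/(2+a)$, $v=(1+\theta(v))/(2+b)$. The main obstacle I anticipate is the subcase where the two cylinders are adjacent ($b = a+1$) and $u, v$ both approach the common boundary $1/(2+a)$: there $|u-v| \to 0$ simultaneously with $|\delta_i(u)-\delta_i(v)| \to 0$, and one must carefully verify the limiting ratio. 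A parametrized computation (taking $u = \Delta^E_{ac(0)}$ with $c \to \infty$ and $v = \Delta^E_{(a+1)(0)}$) yields a limiting Lipschitz ratio of $(2+a)/((2+i)(2+i+a))$, which is $\leq 1/(2+i)$ for every $i \geq 0$ and saturates only when $i = 0$, consistent with the fact that $\delta_0(x) = (1+x)/2$ is literally affinely $1/2$-Lipschitz.
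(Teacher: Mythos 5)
Your reduction $\delta_i(x)=\frac{1}{2+i}\,(1+x^*)$, the recursion $\delta_i(x)=\frac{1}{2+i}+\frac{\delta_i(\theta(x))}{2+i+c}$ on $\Delta^E_c$, and the resulting same-prefix factors $\frac{2+\sigma_k}{2+i+\sigma_k}$ are all correct (they reproduce exactly the cylinder-length ratios that the paper computes). The gap is the base case, and it is not a loose end: because the factors $\frac{2+\sigma_k}{2+i+\sigma_k}$ can be arbitrarily close to $1$ (large digits), the iteration leaves you no slack, so you must prove $\lvert\delta_i(u)-\delta_i(v)\rvert\le\frac{1}{2+i}\lvert u-v\rvert$ with the sharp constant for every pair $u\in\Delta^E_a$, $v\in\Delta^E_b$, $a\ne b$; that inequality is essentially the whole content of the lemma. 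What you offer for it is (a) the idea of bounding $\lvert u-v\rvert$ from below by disjointness, which fails precisely in the adjacent case $b=a+1$ that you yourself flag, and (b) a check along a single one-parameter family $u=\Delta^E_{ac(0)}$, $c\to\infty$, with $v$ pinned at the common endpoint $\Delta^E_{[a+1](0)}=\frac{1}{2+a}$. The dangerous configurations are two-parameter: both points vary near the common endpoint, e.g.\ $u=\Delta^E_{acg_3\ldots}$ and $v=\Delta^E_{[a+1]0\ldots0d\ldots}$ with $k$ zeroes and $d\ge 1$, and a limiting ratio along one path does not bound the difference quotient over all such pairs. As written, the argument is incomplete at its hardest point.

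For comparison, the paper's proof is of a different and much shorter nature: it only records that $\delta_i$ maps the cylinder $\Delta^E_{c_1\ldots c_m}$ onto $\Delta^E_{ic_1\ldots c_m}$ and that the ratio of their lengths equals $\frac{1}{2+i}\cdot\frac{1+\sigma_m}{1+\sigma_m+i}\prod_{k=1}^m\frac{2+\sigma_k}{2+\sigma_k+i}<\frac{1}{2+i}$, i.e.\ contraction is verified on cylinders, which is what the subsequent corollary about null sets actually uses; so you are attempting a stronger, genuinely pointwise Lipschitz statement than the paper verifies. The cleanest way to close your base case is to combine your observations with that cylinder computation: $\delta_i$ is strictly increasing (the order of $E$-representations is reverse lexicographic in the digits, and prepending $i$ preserves it) and continuous (at $E$-rational points this uses that images of adjacent cylinders are adjacent, $\max\Delta^E_{ic_1\ldots c_{m-1}[c+1]}=\inf\Delta^E_{ic_1\ldots c_{m-1}c}$), hence $\delta_i(x)-\delta_i(y)$ is the Lebesgue--Stieltjes mass of $(y,x]$. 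Decompose $(y,x]$ into countably many disjoint maximal cylinders (up to at most the single point $x$, which carries no mass); on each cylinder the mass is the length of its image cylinder, at most $\frac{1}{2+i}$ times its own length, and summing gives $\delta_i(x)-\delta_i(y)\le\frac{1}{2+i}(x-y)$ for all $y<x$ with no boundary analysis at all. Your digit recursion then becomes unnecessary, or can be retained with this chain argument supplying only the missing base case.
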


\begin{proof}
  This proposition follows from the facts that image of cylinder
  $\Delta^E_{c_1c_2\ldots c_m}$ of rank $m$ under mapping $\delta_i$
  is cylinder $\Delta^E_{ic_1c_2\ldots c_m}$ of rank $m+1$ and the
  following relations hold:
  \begin{align*}
    \frac{\lvert\Delta^E_{ic_1c_2\ldots c_m}\rvert}
      {\lvert\Delta^E_{c_1c_2\ldots c_m}\rvert}
    &= \frac{(2+\sigma_1)(2+\sigma_2)\ldots(2+\sigma_m)(1+\sigma_m)}
       {(2+i)(2+\sigma_1+i)(2+\sigma_2+i)\ldots(2+\sigma_m+i)(1+\sigma_m+i)} = \\
    &= \frac{1}{2+i} \cdot \frac{1+\sigma_m}{1+\sigma_m+i}
      \cdot \prod_{k=1}^m \frac{2+\sigma_k}{2+\sigma_k+i}
    < \frac{1}{2+i}
    \leq \frac{1}{2},
  \end{align*}
  where $\sigma_k = c_1 + c_2 + \ldots + c_k$.
\end{proof}

\begin{corollary}
  Let $E$ be a set of zero Lebesgue measure.  Then measure of its
  image under mapping $\delta_i$ is equal to zero too.  That is
  \[
    \lambda(E) = 0 \; \Rightarrow \; \lambda(\delta_i(E)) = 0
  \]
  for any $i \in \Z_0$.
\end{corollary}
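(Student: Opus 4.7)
The plan is to exploit directly the cylinder-level contraction estimate just established: $\delta_i$ sends the rank-$m$ cylinder $\Delta^E_{c_1\ldots c_m}$ onto the rank-$(m+1)$ cylinder $\Delta^E_{ic_1\ldots c_m}$, whose length is at most $\frac{1}{2+i}$ times that of the original. Any countable cover of $E$ by cylinders therefore pushes forward to a countable cover of $\delta_i(E)$ by cylinders whose total length has been scaled down by the factor $\frac{1}{2+i}$.

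First I would recast the hypothesis measure-theoretically: since $\lambda(E)=0$, for every $\varepsilon>0$ there exists a countable family of left-open intervals covering $E$ whose total length is less than $\varepsilon$. I would then refine this cover into a countable family of cylinders $\{\Delta^E_{c^{(k)}_1\ldots c^{(k)}_{m_k}}\}_{k\in\N}$ still covering $E$ with total length below $\varepsilon$. This refinement relies on the basic cylinder properties listed immediately before the Lemma: the cylinders of any fixed rank form a pairwise disjoint partition of $(0,1]$ by left-open intervals whose lengths sum correctly (properties~(1) and~(3)), so every half-open interval admits an at-most-countable disjoint cylinder decomposition of equal total length.

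Second I would push this cover forward. Because $\delta_i$ just prepends the digit $i$ to the $E$-representation,
\[
  \delta_i(E) \subset \bigcup_{k\in\N} \delta_i\bigl(\Delta^E_{c^{(k)}_1\ldots c^{(k)}_{m_k}}\bigr)
  = \bigcup_{k\in\N} \Delta^E_{i c^{(k)}_1 \ldots c^{(k)}_{m_k}},
\]
and by the Lemma each summand has length at most $\frac{1}{2+i}$ times the corresponding original. Hence the total length of the image cover is at most $\frac{\varepsilon}{2+i}$, and since $\varepsilon>0$ is arbitrary, $\lambda(\delta_i(E))=0$.

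The only nontrivial point is the first step, i.e.\ the clean passage from an $\varepsilon$-cover by left-open intervals to an $\varepsilon$-cover by cylinders of arbitrary (and possibly varying) rank. I expect this to be the main obstacle, but it is purely measure-theoretic, depending only on properties~(1) and~(3) of cylinders and on the fact that cylinders of each rank partition $(0,1]$; no specific feature of $\delta_i$ is needed here beyond the contraction estimate the Lemma already supplies.
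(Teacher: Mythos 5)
Your proposal is correct and takes essentially the route the paper intends: the corollary is stated there without proof as an immediate consequence of the contraction lemma, and your covering argument (refine an $\varepsilon$-cover of $E$ into cylinders, push each cylinder $\Delta^E_{c_1\ldots c_m}$ forward to $\Delta^E_{ic_1\ldots c_m}$ of length at most $\frac{1}{2+i}$ times as large) is exactly the natural way to make that implication explicit. The only loose end is harmless: a left-open interval whose right endpoint is not $E$-rational cannot be exhausted by cylinders at that endpoint, but this concerns at most countably many points of $E$, whose images are null anyway (or simply start from an open cover, where every point lies in a cylinder contained in the open set).
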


\begin{lemma}
  \label{lem:f.integral.sum}
  For Lebesgue integral the following equality holds:
  \[
    I \equiv \int_0^1 f(x) \, dx
    = \lim_{m\to\infty} \sum_{n=0}^m I_n
    = \sum_{n=0}^\infty I_n,
  \]
  where
  \[
    I_n = \int_{\Delta^E_{[n+1](0)}}^{\Delta^E_{n(0)}} f(x) \, dx.
  \]
\end{lemma}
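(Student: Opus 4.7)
The plan is to partition $(0,1]$ into the countable family of rank-$1$ cylinders $\{\Delta^E_n\}_{n\in\Z_0}$ (i.e., to split by the value of the first $E$-digit $g_1$) and then invoke countable additivity of the Lebesgue integral.

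First I would apply property~(1) of cylinders at rank $m=0$, which yields the disjoint decomposition $(0,1]=\bigsqcup_{n=0}^{\infty}\Delta^E_n$. Using the description of cylinder endpoints together with property~(2), each $\Delta^E_n$ is the left-open interval $(\Delta^E_{[n+1](0)},\Delta^E_{n(0)}]$, so
\[
  I_n=\int_{\Delta^E_{[n+1](0)}}^{\Delta^E_{n(0)}}f(x)\,dx=\int_{\Delta^E_n}f(x)\,dx.
\]
Since the union of the $\Delta^E_n$ differs from $[0,1]$ only by the singleton $\{0\}$, which is Lebesgue-null, it is legitimate to write $\int_0^1$ in place of $\int_{(0,1]}$.

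Next I would note that $f$ is bounded (by Lemma~\ref{lem:f.range}, $0\le f\le 1$) and continuous on $[0,1]$ (by Theorem~\ref{thm:f.continuous}), hence Lebesgue integrable. On each cylinder the restriction of $f$ is integrable with $|I_n|\le\lambda(\Delta^E_n)=|\Delta^E_n|$, and since $\sum_{n=0}^{\infty}|\Delta^E_n|=\lambda((0,1])=1<\infty$, the series $\sum_n I_n$ converges absolutely. Then countable additivity of the Lebesgue integral over the pairwise disjoint measurable family $\{\Delta^E_n\}$ gives
\[
  I=\int_0^1 f(x)\,dx
   =\sum_{n=0}^{\infty}\int_{\Delta^E_n}f(x)\,dx
   =\sum_{n=0}^{\infty}I_n,
\]
and the equality $\sum_{n=0}^{\infty}I_n=\lim_{m\to\infty}\sum_{n=0}^{m}I_n$ is just the definition of a convergent series.

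There is essentially no serious obstacle here: the lemma is a routine application of countable additivity once the cylindrical partition structure is recognized. The only subtle point is to confirm that the countable collection of $E$-rational boundary points $\Delta^E_{n(0)}$ is Lebesgue-null (which is immediate), so that the disjoint union of the $\Delta^E_n$ genuinely covers $[0,1]$ up to a null set and the exchange of integral with infinite sum is justified by the absolute bound $\sum|I_n|\le 1$.
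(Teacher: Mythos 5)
Your proof is correct and follows essentially the same route as the paper's: decompose $(0,1]$ into the disjoint rank-one cylinders $\Delta^E_n=\bigl(\Delta^E_{[n+1](0)},\Delta^E_{n(0)}\bigr]$, use integrability of the bounded continuous $f$, and apply countable additivity of the Lebesgue integral. Your added details (the absolute bound $\lvert I_n\rvert\le\lvert\Delta^E_n\rvert$ and the null boundary set) simply make explicit what the paper's one-sentence argument leaves implicit.
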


\begin{proof}
  This proposition follows from integrability of the function,
  additive property of the Lebesgue integral, and fact that $(0, 1]$
  is a union of countable set of disjoint intervals
  $\bigl(\Delta^E_{[n+1](0)}, \Delta^E_{n(0)}\bigr]$, $n = 0$, $1$,
  $2$, \ldots.
\end{proof}

\begin{remark}
  ``Symmetries'' of graph of function studied in the previous section
  should help to express $I_n$ in terms of
  \[
    \int_0^1 f(x) \, dx.
  \]
  But non-self-similar geometry of $E$-representation essentially
  complicates this problem.
\end{remark}

\begin{theorem}
  For Lebesgue integral
  \[
    \int_0^1 f(x) \, dx
  \]
  the following estimate holds:
  \begin{equation}
    \label{eq:f.integral.est}
    \int_0^1 f(x) \, dx
    \leq \biggl( 1 - \sum_{n=0}^\infty \frac{u_n}{2+n} \biggr)^{-1}
      \sum_{n=0}^\infty \frac{r_n}{2+n}.
  \end{equation}
\end{theorem}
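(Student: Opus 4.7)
My plan is to combine Lemma~\ref{lem:f.integral.sum}, the functional equation~\eqref{eq:f.func.eq}, and the contractivity of the maps $\delta_i$ established in the first lemma of this section. First, by Lemma~\ref{lem:f.integral.sum}, $I:=\int_0^1 f\,dx = \sum_{n\geq 0} I_n$ with $I_n = \int_{\Delta^E_n} f(x)\,dx$, where $\Delta^E_n$ is the first-rank cylinder with $g_1\equiv n$. On $\Delta^E_n$, equation~\eqref{eq:f.func.eq} reads $f(x)=r_n+u_nf(\omega(x))$, and so
\[
I_n = r_n\,|\Delta^E_n| + u_n\,J_n,\qquad J_n:=\int_{\Delta^E_n}f(\omega(x))\,dx.
\]

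Since $\omega\colon\Delta^E_n\to(0,1]$ is a bijection with inverse $\delta_n$, the substitution $y=\omega(x)$ yields $J_n=\int_0^1 f(y)\,\delta_n'(y)\,dy$. The contractivity lemma shows that the cylinder-length ratios $|\delta_n(\Delta)|/|\Delta|$ are uniformly bounded by $1/(2+n)$, and this passes to the almost-everywhere pointwise bound $\delta_n'(y)\leq 1/(2+n)$ (by Lebesgue differentiability of the monotone $\delta_n$ and evaluation along shrinking cylinders containing a density point). Combined with $f\geq 0$ from Lemma~\ref{lem:f.range}, this gives $0\leq J_n\leq I/(2+n)$. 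Using also the elementary estimate $|\Delta^E_n| = 1/((1+n)(2+n))\leq 1/(2+n)$, one then obtains
\[
I_n \leq \frac{r_n}{2+n} + \frac{u_n\,I}{2+n}.
\]
Summing in $n$ gives $I\leq\sum_n r_n/(2+n) + I\sum_n u_n/(2+n)$; rearranging and verifying that the denominator $1-\sum_n u_n/(2+n)$ is positive (which follows from $\sum u_n=1$ and $1/(2+n)\leq 1/2$ in the standard nonnegative case, using~\eqref{eq:un.init.conditions.sum1}) produces the announced bound.

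The main obstacle is the step $u_nJ_n\leq u_n I/(2+n)$. It is immediate from $J_n\leq I/(2+n)$ when $u_n\geq 0$, but for $u_n<0$ the direction of the inequality reverses and would require a matching lower bound $J_n\geq I/(2+n)$, which fails in general since $\delta_n'$ drops well below $1/(2+n)$ on most of $(0,1]$. I therefore expect the argument to go through verbatim under the natural assumption $u_n\geq 0$ (the case where $f$ is a probability distribution function, treated earlier in the paper); in the mixed-sign case one would regroup by the sign of $u_n$, using $u_nJ_n\leq 0$ for the negative terms and $u_nJ_n\leq u_nI/(2+n)$ for the nonnegative ones, and the resulting bound would feature $\sum_n\max(u_n,0)/(2+n)$ in place of $\sum_n u_n/(2+n)$.
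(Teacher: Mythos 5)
Your overall route (decompose $I=\sum_n I_n$ via Lemma~\ref{lem:f.integral.sum}, transport each $I_n$ back to $(0,1]$ through $\delta_n$, and use the contraction coefficient $\frac{1}{2+n}$) is the paper's route, but there is a genuine gap exactly where you flag it: the theorem is stated for every sequence $(u_n)$ satisfying \eqref{eq:un.init.conditions.sum1}--\eqref{eq:un.init.conditions.rn.un}, and such sequences may contain negative terms (the paper's own example has $u_1=-\frac{1}{6}$), whereas your argument proves the claimed bound only when all $u_n\ge 0$ and offers a strictly weaker bound (with $\max(u_n,0)$) otherwise. The missing idea is simply not to split $I_n$ into $r_n\lvert\Delta^E_n\rvert+u_nJ_n$. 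Write instead
\[
  I_n=\int_0^1\bigl(r_n+u_nf(x)\bigr)\,d\delta_n(x),
\]
and note that the integrand is a single nonnegative function: by the system~\eqref{eq:f.func.eqs} and Theorem~\ref{thm:f.range}, $r_n+u_nf(x)=f(\delta_n(x))\in(0,1]$. The measure domination $d\delta_n\le\frac{1}{2+n}\,dx$, which you correctly extract from the contraction lemma, may then be applied to this nonnegative integrand as a whole, giving
\[
  I_n\le\frac{1}{2+n}\int_0^1\bigl(r_n+u_nf(x)\bigr)\,dx
  =\frac{r_n}{2+n}+\frac{u_n}{2+n}\int_0^1 f(x)\,dx
\]
for every $n$, with no case distinction on the sign of $u_n$; summing over $n$ and rearranging then yields \eqref{eq:f.integral.est} exactly as in your final display. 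This is how the paper argues (its reference to Lemma~\ref{lem:f.integral.sum} for the bound on $d\delta_n$ is a slip; the bound comes from the contraction lemma, as you correctly used).

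A smaller point: the positivity of $1-\sum_n\frac{u_n}{2+n}$, which you verify only in the nonnegative case, also holds in general. By Abel summation with the decreasing weights $\frac{1}{2+n}$ and the fact that the partial sums $S_N=u_0+\dots+u_N$ lie in $(0,1)$ (a consequence of \eqref{eq:un.init.conditions.rn.un}), one gets $\sum_n\frac{u_n}{2+n}\le\frac{1}{2}$, so the final division is legitimate without any sign assumption.
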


\begin{proof}
  Taking into account Theorem~\ref{thm:f.graph.symmetric} and
  Lemma~\ref{lem:f.solution.func.eqs} we have
  \[
    I_n = \int_{\Delta^E_{[n+1](0)}}^{\Delta^E_{n(0)}} f(x) dx
    = \int_0^1 \bigl(r_n + u_n f(x)\bigr) \, d\delta_n(x).
  \]
  Then from the Lemma~\ref{lem:f.integral.sum} it follows that
  \[
    d\delta_n(x) \leq \frac{1}{2+n} \, dx
  \]
  and
  \[
    I_n \leq \frac{1}{2+n} \int_0^1 \bigl(r_n + u_n f(x)\bigr) \, dx
    = \frac{r_n}{2+n} + \frac{u_n}{2+n} \int_0^1 f(x) \, dx.
  \]

  Thus
  \begin{align*}
    \int_0^1 f(x) \, dx
    &\leq \sum_{n=0}^\infty \biggl( \frac{r_n}{2+n}
      + \frac{u_n}{2+n} \int_0^1 f(x) \, dx \biggr) = \\
    &= \sum_{n=0}^\infty \frac{r_n}{2+n} + \biggl(
      \sum_{n=0}^\infty \frac{u_n}{2+n} \biggr) \int_0^1 f(x) \, dx
  \end{align*}
  and
  \[
    \biggl( 1 - \sum_{n=0}^\infty \frac{u_n}{2+n} \biggr) \int_0^1 f(x) \, dx
    \leq \sum_{n=0}^\infty \frac{r_n}{2+n}.
  \]
  This inequality is equivalent to~\eqref{eq:f.integral.est}.
\end{proof}

\providecommand{\bysame}{\leavevmode\hbox to3em{\hrulefill}\thinspace}
\providecommand{\MR}{\relax\ifhmode\unskip\space\fi MR }
\providecommand{\MRhref}[2]{%
  \href{http://www.ams.org/mathscinet-getitem?mr=#1}{#2}
}
\providecommand{\href}[2]{#2}

\end{document}